\numberwithin{equation}{section}
\newcommand{\R}{\mathbb{R}}
\newcommand{\seq}{\operatorname{seq}}
\def\R{\mathbb{R}}
\newcommand \footnoteONLYtext[1]
	\let \mybackup \thefootnote
	\let \thefootnote \relax
	\let \thefootnote \mybackup
	\let \mybackup \imareallyundefinedcommand
\definecolor{DarkBlue}{rgb}{0,0,.8}
\newtheorem{theorem}{Theorem}[section]
\newtheorem{lemma}{Lemma}[section]
\newtheorem{definition}{Definition}[section]
\newtheorem{proposition}{Proposition}[section]
\newtheorem{corollary}{Corollary}[section]
\numberwithin{equation}{section}
\providecommand{\keywords}[1]
{
  \small	
  \textbf{\textit{Keywords:}} #1
}
\providecommand{\msc}[1]
{
  \small	
  \textbf{\textit{Mathematics Subject Classification:}} #1
}
\begin{document}

\title{Convergence analysis of OT-Flow for sample generation}

\author[1]{Yang Jing \thanks{sharkjingyang@sjtu.edu.cn} }
\author[2]{Lei Li \thanks{leili2010@sjtu.edu.cn} }
\affil[1,2]{School of Mathematical Sciences, Shanghai Jiao Tong University, Shanghai, 200240, P.R.China.}
\affil[2]{Institute of Natural Sciences, MOE-LSC, Shanghai Jiao Tong University, Shanghai, 200240, P.R.China.}
\affil[2]{Shanghai Artificial Intelligence Laboratory}

\date{}
\maketitle

\begin{abstract}
Deep generative models aim to learn the underlying distribution of data and generate new ones. Despite the diversity of generative models and their high-quality generation performance in practice, most of them lack rigorous theoretical convergence proofs. In this work, we aim to establish some convergence results for OT-Flow, one of the deep generative models. First, by reformulating the framework of OT-Flow model, we establish the $\Gamma$-convergence of the formulation of OT-flow to the corresponding optimal transport (OT) problem as the regularization term parameter $\alpha$ goes to infinity. Second, since the loss function will be approximated by Monte Carlo method in training, we established the convergence between the discrete loss function and the continuous one when the sample number $N$ goes to infinity as well. Meanwhile, the approximation capability of the neural network provides an upper bound for the discrete loss function of the minimizers. The proofs in both aspects provide convincing assurances for OT-Flow.

\end{abstract}
\keywords{OT-Flow; $\Gamma$-convergence; sample limit }\\
\msc{49Q22; 68T07}

\section{Introduction}\label{sec:intro}
Deep generative models \cite{goodfellow2014generative,DBLP:journals/corr/KingmaW13, kingma2019introduction,rezende2015variational} exhibit promising performance across a variety of tasks, including image generation\cite{dhariwal2021diffusion}, text-to-image generation\cite{ramesh2022hierarchical} and video generation\cite{liu2024sora}. The widely-used frameworks include diffusion probabilistic models (DPMs)\cite{ho2020denoising,song2020denoising,bao2022analytic}, continuous normalizing flows (CNFs)\cite{chen2018neural,grathwohl2018ffjord}, variational auto-encoders (VAEs)\cite{DBLP:journals/corr/KingmaW13, kingma2019introduction} and generative adversarial networks (GANs)\cite{goodfellow2014generative,arjovsky2017wasserstein}. Among above four popular frameworks, CNFs are characterized by continuous-time ordinary differential equations (ODEs), and DPMs utilize stochastic differential equations (SDEs) as their backbone. Through DPMs and CNFs, samples evolve from data points to Gaussian distribution in the forward process and gradually remove noise  to generate samples in the backward process. In comparison with GANs and VAEs, samples of DPMs and CNFs are generated in smoother way, not only achieving superior sample quality but also enabling exact likelihood computation. Despite the diversity of generative models and their outstanding performance in downstream tasks, the mathematical principles behind the models and rigorous convergence proofs are developed far behind the rapid iteration of the models. In this paper, our focus lies in establishing convergence results for OT-Flow, which stands as one of the most practical CNFs. Such convergence analysis ensures stability during the training and aids in comprehending the underlying mechanisms of the model.

The continuous normalizing flows (CNFs) are a class of sample generative models based on particle transportation purely. The CNFs aim to build continuous and invertible mappings between an arbitrary distribution $\rho_{0}$ and standard normal distribution $\rho_{1}$ by setting the velocity field as an output of neural network. In particular, for a given time $T$, one is trying to obtain a mapping $z: \R^{d} \times [0,T]\rightarrow \R^{d}$, which defines a continuous evolution $x \mapsto z(x, t) $ of every $x \in \R^{d}$. Then the density $\rho(z(x,t),t)$ satisfies
\begin{equation}\label{original density formulation}
    \log \rho_{0}(x)=\log \rho(z(x,t),t)+\log|\det \nabla z(x,t)| \quad \text{for all} \quad x \in \R^{d}.
\end{equation}
Especially at time $T$ we have $\log \rho_{0}(x)=\log \rho_{1}(z(x,T),T)+\log|\det \nabla z(x,T)| $. Define $\ell(x, t):=\log |\det \nabla z(x, t)|$, then $z(x,t)$ and $\ell(x,t)$ satisfy the following ODE system
\begin{equation}\label{eq:original ODE}
\partial_{t}\left[\begin{array}{c}
z(x, t) \\
\ell(x, t)
\end{array}\right]=\left[\begin{array}{c}
v(z(x, t), t ; \boldsymbol{\theta}) \\
\operatorname{tr}(\nabla v(z(x, t), t ; \boldsymbol{\theta}))
\end{array}\right], \quad\left[\begin{array}{c}
z(x, 0) \\
\ell(x, 0)
\end{array}\right]=\left[\begin{array}{c}
x \\
0
\end{array}\right].
\end{equation}
To train the dynamics, CNFs  minimize the expected negative log-likelihood given by the right-hand-side in \eqref{original density formulation}, or equivalently the KL divergence between target distribution and final distribution under the constraint \eqref{eq:original ODE} \cite{rezende2015variational,papamakarios2017masked,papamakarios2021normalizing,grathwohl2018ffjord}:
\[
J=\mathbb{KL}\left[\rho(\boldsymbol{z}(\boldsymbol{x}, T)) \| \rho_1(\boldsymbol{z}(\boldsymbol{x}, T))\right].
\]
For convenience we solve \eqref{eq:original ODE} together to obtain the change of $\rho$, which will lead to a more efficient estimation of density. 

From the ODE system \eqref{eq:original ODE}, we can see that once the velocity field is learned, one can track the evolution of density and invert the transport map by running the ODE backward. The invertibility of CNFs grants us access to estimate the density of the sample space, which can be employed for density estimation and Bayesian inference.

In general, the velocity field that transforms a given probability measure to a target one is not unique in the formulation of CNFs. One should observe the parallels between CNFs and the dynamic formulation of Wasserstein distance, where both involve the evolution of probability distributions through mass transportation under a velocity field. It is natural to incorporate optimal transport concepts into CNFs to enhance algorithm performance. Finlay et al. \cite{finlay2020train} pioneered the introduction of optimal transportation regularization in normalizing flows, while Onken et al. \cite{onken2021ot} subsequently proposed OT-Flow as an enhanced version of CNFs, which leverages optimal transport theory to regularize the CNFs and enforce straight trajectories that are easier for numerical integration. OT-Flow may be preferred in applications due to the particles' straight-line paths and trajectories avoiding intersections. Consequently, such a model is expected to improve its invertibility and generation efficiency.

The optimal transport (OT) was first introduced by Monge \cite{monge1781memoire} in 1781 and was relaxed by Kantorovich \cite{kantorovich2006problem}. The optimal transport theory actually provides a specific way to transform the measure $\mu$ to $\nu$ with minimum transportation cost. In particular, let $\Omega\subset \R^d$. Given two distribution $\mu$ and $\nu\in \mathcal{P}(\Omega)$, where $\mathcal{P}(\Omega)$ is the set of all probability measures on $\Omega$. One can define the Wasserstein-$p$ distance ($p \geq 1$) between $\mu$ and $\nu$ :
\begin{equation*}
W_{p}(\mu, \nu)=\left(\inf _{\gamma \in \Pi(\mu, \nu)} \int|x-y|^{p} d \gamma\right)^{1 / p},
\end{equation*}
where $\Pi(\mu, \nu)$ is the set of {\it transport plans}, i.e. a joint measure on $X \times Y$, with marginal distribution $\mu$ and $\nu$. Define the space 
\[
\mathcal{W}_{p}:=\{\mu \in \mathcal{P}(\Omega): \int_\Omega |x|^{p} \mu(dx)< \infty\}.
\]
Then $(\mathcal{W}_{p},W_{p})$ is a complete metric space. Wasserstein distance stands out as a prominent choice among metrics due to its ability to quantify dissimilarity between two distributions, even in cases where one or both distributions consist of discrete data samples with disjoint supports, which can be applied to traditional models for improvements \cite{onken2021ot,DBLP:conf/iclr/SalimansZRM18}. On a convex and compact domain $\Omega$, the Wasserstein distance $W_{p}$ admits the following dynamic Benamou-Brenier formulation \cite[Chap. 5 Theorem 5.28]{santambrogio2015optimal}.  Let $\mu$ and $\nu$ are two probability distribution on $\Omega$ and are absolutely continuous with respect to the Lebesgue measure, and $v_t$ is a vector field on $\Omega$,
\begin{equation}\label{eq:dynamic_ot}
W_{p}^{p}(\mu, \nu)=\min_{\rho, v}\left\{\int_{0}^{1}\left\|v_{t}\right\|_{L^{p}(\rho)}^{p} d t: \partial_{t} \rho_{t}+\nabla \cdot\left(\rho_{t} v_{t}\right)=0, v\cdot n|_{\partial\Omega}=0, \rho_{0}=\mu, \rho_{1}=\nu\right\},
\end{equation}
where $\left\|v_{t}\right\|_{L^{p}(\rho_{t})}^{p}=\int_{\Omega}\left|v_{t}(x)\right|^{p} \rho_{t}(dx)$. 
Clearly, the optimal velocity field $v_t$ in this problem is one of the candidates for CNFs and could be optimal in certain sense. The cost in the OT-Flow to train the velocity field is then given as follows
\begin{equation}\label{eq:Cost1}
    J=\mathbb{KL}\left[\rho(x, T) \| \rho_{1}(x)\right]+\frac{2}{\alpha}\mathbb{E}_{\rho_{0}} \left[\int_{0}^{T} \frac{1}{2}|v(z(x, t), t)|^{2} d t \right].
\end{equation}
Here $\alpha$ is a hyperparameter to balance KL divergence and trajectory penalty. Note that even when $\Omega$ is not compact or not convex when there is some possibility that dynamic formulation is not strictly equal to the Wasserstein distance, such a formulation could still be beneficial since the dynamical formulation itself corresponds to some metric.

The KL divergence term in \eqref{eq:Cost1} serves as a soft terminal constraint, which enforces the terminal distribution $\rho(x,T)$ transported by velocity field to get close to $\rho_1$. The second term is related to the dynamic Benamou-Brenier formulation of $W_{2}$ distance in optimal transport theory, which can also be regarded as a penalty of the squared arc-length of the trajectories. Ideally if the KL divergence term is zero, minimizing the cost function is equivalent to minimizing $W_{2}$ distance and solving the optimal velocity field, which will  encourage straight trajectory.

OT-Flow proves practical in sample generation, gaining an advantage over previous CNFs by integrating optimal transport (OT) concepts. One may find it interesting to explore the relationship between the velocity field solved by neural networks in OT-Flow and the classical solutions of OT. In our work, we conduct a convergence analysis for OT-Flow. More specifically, our convergence analysis  mainly contain two parts. In the first part, we reformulate OT-Flow and classical OT problems into continuous optimization problems with similar form. Subsequently, we demonstrate that OT-Flow  $\Gamma$-converges to OT as the regularization coefficient $\alpha \to \infty$, indicating the minimizers of OT-Flow will converges to ones of OT. One should notice that OT-Flow uses data samples to approximate the equivalent loss functional. In the second part, we illustrate that as with an increasing dataset size, the minimizers of whose loss functional is approximated through Monte Carlo method, will eventually converge to theoretical solutions with a sufficient neural network approximation capability.

The rest of the paper is organized as follows. Section \ref{sec:setup and notations} provides some setup and notations to formulate the problem. We provide an overview of $\Gamma$-convergence as well, which serves as a fundamental tool in our analysis. Then we establish a convergence analysis between OT-Flow and classical OT problems in section \ref{Convergence from OT-Flow to OT}. In section \ref{sec: convergence of N}, we consider the convergence of the minimizers with respect to sample number $N \to \infty$. We conclude the work and make a discussion in section \ref{sec:dis}.

\section{Setup and notations}\label{sec:setup and notations}

Recall that the optimization problem of OT-Flow in \eqref{eq:Cost1}. Assume $D \subset \mathbb{R}^d$ is a bounded domain with smooth boundary.  The KL divergence (relative entropy) between two probability measure $\mu$ and $\nu$ on $\mathbb{R}^d$ is defined by
\begin{equation*}
    \mathbb{KL}\left[\mu|| \nu\right]=\left\{\begin{array}{lr}
    \int_{D} \log (\frac{d \mu}{d \nu}) d\mu, & \text { if } \mu \ll \nu \\
    \ \infty, & \text { else, }
    \end{array}\right.
\end{equation*}
where $\frac{d\mu}{d \nu}$ denotes the Radon-Nikodym derivative of $\mu$ with respect to $\nu$. Note that the KL divergence is non-negative by Jensen's inequality and achieves zero only if $\mu= \nu$. Moreover, it is a convex functional with respect to either argument. To obtain the full description of the mathematical problems, one also needs to specify the no-flux boundary condition $v \cdot n =0$ on $\partial D$ by the physical significance. Hence, the optimization problem becomes
\begin{equation}\label{minimization problem:OT-flow v rho}
\begin{array}{ll}
\min\limits_{\rho, v} & \mathbb{KL}\left[\rho(x,T)||\rho_1(x)\right]+ \frac{1}{\alpha}\int_{0}^{T} \int_{D}\rho |v|^2 dx dt, \\
\text { s.t. } &\partial_{t} \rho +\nabla \cdot(\rho v)=0  \text { in } D \times [0,T] , \\
\text{with} & \rho(x,0)=\rho_{0}(x), \quad v(x,t)\cdot n=0
\text{ on }  \partial D \times [0,T].
\end{array}
\end{equation}
However, the minimization problem above in the variables $(\rho,v)$ has nonlinear constraints, it is found convenient to switch variables from $(\rho, v)$ into $(\rho, m)$ where $m=\rho v$. Thus the full description of the optimization problem of OT-Flow gives as follows:
\begin{equation}
\label{minimization problem:OT-flow m rho  1/alpha}
\begin{array}{ll}
\min\limits_{\rho, m} & \mathbb{KL}\left[\rho(x,T)||\rho_1(x)\right]+\frac{1}{\alpha}\int_{0}^{T} \int_{D}\frac{|m|^2}{\rho} dx dt, \\
\text { s.t. } &\partial_{t} \rho +\nabla \cdot m=0\text { in } D \times [0,T] , \\
\text{with} & \rho(x,0)=\rho_{0}(x),\\
& m(x,t) \cdot n=0  \text{ on }  \partial D \times [0,T].
\end{array}
\end{equation}
Correspondingly, the optimal transport problem then becomes:
\begin{equation}
\label{minimization problem:OT m rho}
\begin{array}{ll}
\min\limits_{\rho, m} & \int_{0}^{T} \int_{D}\frac{|m|^2}{\rho} dx dt, \\
\text { s.t. } &\partial_{t} \rho +\nabla \cdot m=0\text { in } D \times [0,T] , \\
\text{with} & \rho(x,0)=\rho_{0}(x),\rho(x,T)=\rho_{1}(x),\\
& m(x,t) \cdot n=0  \text{ on }  \partial D \times [0,T].
\end{array}
\end{equation}

In section \ref{Convergence from OT-Flow to OT}, we will first study the convergence of \eqref{minimization problem:OT-flow m rho 1/alpha} to \eqref{minimization problem:OT m rho}. We will allow $m$ to be measures and the exact meaning of $|m|^2/\rho$ will be explained in section \ref{sec:Lower semi-continuity}. Moreover, the working space we choose in this work is
\begin{equation*}
\rho \in L^{1}([0,T];\mathcal{W}_{2}(D)), \quad
m \in L^{1}([0,T];\mathscr{M}(D)).
\end{equation*}
Then, the problems \eqref{minimization problem:OT-flow m rho 1/alpha}--\eqref{minimization problem:OT m rho} become convex optimization and the corresponding functionals have the lower semicontinuity with respect to the weak convergence (see section \ref{sec:Lower semi-continuity} for details).

The main tool we use is the $\Gamma$-convergence \cite{braides2002gamma}. Here, let us briefly introduce the relevant concepts.
We will collect some basics of  $\Gamma$-convergence, which is commonly used to investigate the convergence of the optimization problems and their optimizers. Here we first recall the definitions of the usual $\Gamma$-convergence:
\begin{definition}\label{def:Gmconv}
Let $X$ be a topological space. Let $(f_n)$ be a sequence of functionals on $X$. Define
    \begin{gather*}
    \begin{split}
    & \Gamma\hbox{-}\limsup_{n\to\infty} f_n(x)=\sup_{N_x}\limsup_{n\to\infty}\inf_{y\in N_x}f_n(y),\\
    & \Gamma\hbox{-}\liminf_{n\to\infty} f_n(x)=\sup_{N_x}\liminf_{n\to\infty}\inf_{y\in N_x}f_n(y),
    \end{split}
    \end{gather*}
    where $N_x$ ranges over all the neighborhoods of $x$. If there exists a functional $f$ defined on $X$ such that
    \begin{gather*}
    \Gamma\hbox{-}\limsup_{n\to\infty} f_n=\Gamma\hbox{-}\liminf_{n\to\infty} f_n =f,
    \end{gather*}
    then we say the sequence $(f_n)$ $\Gamma$-converges to $f$.
\end{definition}
\begin{proposition}
    Any cluster point of the minimizers of a $\Gamma$-convergent sequence $(f_n)$ is a minimizer of the corresponding $\Gamma$-limit functional $f$.
\end{proposition}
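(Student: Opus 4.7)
The plan is to reduce Definition~\ref{def:Gmconv} to the standard sequential characterization of $\Gamma$-convergence and then chain inequalities via minimality. Let $x_n$ be a minimizer of $f_n$ and suppose a subsequence $x_{n_k}$ converges to some $x \in X$; the goal is to show $f(x) \leq f(y)$ for every $y \in X$.

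First I would unpack the neighborhood-based formulas in Definition~\ref{def:Gmconv} into their two familiar consequences: (i) the liminf inequality $f(x) \leq \liminf_k f_{n_k}(x_{n_k})$ whenever $x_{n_k} \to x$, and (ii) the existence of a recovery sequence $y_n \to y$ with $\limsup_n f_n(y_n) \leq f(y)$. Statement (i) follows from noting that for any neighborhood $N_x$ of $x$ one has $\inf_{z \in N_x} f_{n_k}(z) \leq f_{n_k}(x_{n_k})$ once $x_{n_k}$ enters $N_x$, while (ii) is obtained by selecting a countable nested neighborhood basis at $y$ and performing a diagonal extraction. All ambient spaces in the paper are metric (Wasserstein-type), so first-countability makes this reduction unambiguous.

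Next, for an arbitrary $y \in X$ I would produce a recovery sequence $y_n \to y$ as in (ii) and then exploit minimality $f_{n_k}(x_{n_k}) \leq f_{n_k}(y_{n_k})$ along the chosen subsequence. Combining with (i) yields
\[
f(x) \;\leq\; \liminf_{k\to\infty} f_{n_k}(x_{n_k}) \;\leq\; \liminf_{k\to\infty} f_{n_k}(y_{n_k}) \;\leq\; \limsup_{n\to\infty} f_n(y_n) \;\leq\; f(y).
\]
Since $y$ is arbitrary, this shows that $x$ is a minimizer of $f$.

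The only subtle step is the passage from the sup-over-neighborhoods expressions to the sequential inequalities (i)--(ii); on a general topological space one would need nets or filters, but the first-countable (in fact metric) setting here makes ordinary sequences sufficient. I expect this reduction to be the main technical item to articulate carefully, and it can be bypassed entirely if one chooses to adopt the sequential form as an equivalent definition of $\Gamma$-convergence from the outset; everything afterwards is a direct chain of inequalities driven by minimality.
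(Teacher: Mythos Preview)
The paper does not actually supply a proof for this proposition; it is stated as a known background fact from $\Gamma$-convergence theory (the reference \cite{braides2002gamma} is cited for the general framework). Your argument is the standard textbook proof and is correct: the liminf inequality at the cluster point, the recovery sequence at an arbitrary competitor, and minimality of each $x_{n_k}$ combine exactly as you wrote. One small remark: the liminf inequality (i) in fact follows directly from the neighborhood definition without any countability hypothesis (for any $N_x$, eventually $x_{n_k}\in N_x$ gives $\inf_{N_x} f_{n_k}\le f_{n_k}(x_{n_k})$, and $\liminf_n$ over the full sequence is dominated by $\liminf_k$ over the subsequence), so first-countability is only needed for the recovery-sequence step (ii), just as you indicated.
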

Thus $\Gamma$-convergence is an ideal tool to describe the convergence of the minimizers of a sequence of optimization problems. Especially, one may apply $\Gamma$-convergence to study the asymptotic behavior of neural network solutions as regularization coefficient goes to infinity or zero.

Direct verification of $\Gamma$-convergence using Definition \ref{def:Gmconv} can be challenging in most cases. Some refined versions of the $\Gamma$-convergence were proposed in \cite{buttazzo1982gamma}, which are known as  $\Gamma_{\seq}$-convergence. There are many notions of $\Gamma_{\seq}$-convergence.  Here, we will introduce two of them, which will be used to establish  $\Gamma$-convergence analysis from OT-Flow to OT in section \ref{Convergence from OT-Flow to OT}. 
In particular, we will consider
\begin{equation*}
\Gamma_{\seq}(\mathbb{N}^+, X^-)\lim_n f_n := \inf_{x^n \rightarrow x} \limsup_{n\rightarrow \infty} f_n(x_n),   
\end{equation*}
and
\begin{equation*}
\Gamma_{\seq}(\mathbb{N}^-, X^-)\lim_n f_n  := \inf_{x^n \rightarrow x} \liminf_{n\rightarrow \infty} f_n(x_n).
\end{equation*}
Here, the $\inf_{x^n \rightarrow x}$ means the infimum is taken with respect to all sequences $\{x_n\}$ that converge to $x$.
The following proposition gives the closed relationship between $\Gamma_{\seq}$-convergence and the usual $\Gamma$-convergence. We will omit the proof and a more rigorous proof can be found in \cite{xiong2024convergence}.
\begin{proposition}\label{connection}
Suppose that $X$ is a first-countable topological space. It holds that
    \begin{gather*}
        \begin{aligned}
            &\inf_{x^n \rightarrow x} \limsup_{n\rightarrow \infty} f_n(x_n) =\Gamma\hbox{-}\limsup_{n\to\infty} f_n ,\\
            &\inf_{x^n \rightarrow x} \liminf_{n\rightarrow \infty} f_n(x_n) =\Gamma\hbox{-}\liminf_{n\to\infty} f_n .
        \end{aligned}
    \end{gather*}
    Consequently, if 
    \begin{equation*}
        \inf_{x^n \rightarrow x} \limsup_{n\rightarrow \infty} f_n(x_n)=\inf_{x^n \rightarrow x} \liminf_{n\rightarrow \infty} f_n(x_n) := f
    \end{equation*} exists, then $(f_n)$ $\Gamma$-converges to $f$.
\end{proposition}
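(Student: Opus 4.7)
The plan is to prove the two displayed equalities separately and then combine them for the ``Consequently'' statement. I will detail the $\limsup$ equality in full; the $\liminf$ case follows the same template with minor adjustments.

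For the easier ``$\ge$'' direction I would fix any neighborhood $N_x$ of $x$ and any sequence $x_n \to x$. Since $x_n$ eventually enters $N_x$, one has $\inf_{y\in N_x}f_n(y)\le f_n(x_n)$ for all large $n$. Passing to $\limsup_{n\to\infty}$ and then taking $\sup$ over $N_x$ on the left and $\inf$ over sequences on the right yields
\[
\sup_{N_x}\limsup_{n\to\infty}\inf_{y\in N_x}f_n(y)\;\le\;\inf_{x^n\to x}\limsup_{n\to\infty}f_n(x_n).
\]
Note that this direction never invokes first-countability.

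The ``$\le$'' direction is where the assumption on $X$ is essential. Using first-countability I would fix a decreasing countable neighborhood base $\{U_k\}_{k\ge 1}$ at $x$ (replacing $U_k$ by $\bigcap_{j\le k}U_j$ if needed). Since any neighborhood of $x$ contains some $U_k$, and shrinking the set can only increase the infimum, the sup over all neighborhoods collapses to $L := \sup_k \limsup_{n} \inf_{y\in U_k} f_n(y)$. The crux is then a diagonal construction: assuming $L<\infty$ (the other case is trivial), for each $k$ pick $n_k$ with $\inf_{y\in U_k} f_n(y)\le L+1/k$ for all $n\ge n_k$, arranged $n_1<n_2<\cdots$; for $n_k\le n<n_{k+1}$ choose $x_n\in U_k$ nearly attaining this infimum, and pick $x_n$ arbitrarily for $n<n_1$. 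Then $x_n\to x$ because $\{U_k\}$ is a base, and $\limsup_n f_n(x_n)\le L$, producing an explicit competitor on the right.

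For the $\liminf$ equality the ``$\ge$'' half is identical. For ``$\le$'', with $L'=\sup_k\liminf_n\inf_{y\in U_k} f_n(y)$, the subtlety is that one only has infinitely many indices $n$ satisfying $\inf_{y\in U_k} f_n(y)\le L'+1/k$, not all sufficiently large indices. The fix is to pick a strictly increasing sequence $n_1<n_2<\cdots$ with $\inf_{y\in U_k} f_{n_k}(y)\le L'+1/k$, set $x_{n_k}\in U_k$ nearly attaining this, and fill in the remaining indices $n\in(n_k,n_{k+1})$ with any choice inside $U_k$ so that $x_n\to x$; the subsequence $x_{n_k}$ then already forces $\liminf_n f_n(x_n)\le L'$. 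The main obstacle of the whole argument is precisely this diagonalization: one must simultaneously guarantee topological convergence $x_n\to x$ and the correct bound along (a subsequence of) the diagonal, and first-countability is exactly what permits such a coordinated choice. Once the two equalities are established, the ``Consequently'' clause is immediate, since pointwise agreement of the two sequential expressions translates into $\Gamma\hbox{-}\limsup_{n\to\infty} f_n = \Gamma\hbox{-}\liminf_{n\to\infty} f_n = f$, i.e.\ $\Gamma$-convergence of $(f_n)$ to $f$.
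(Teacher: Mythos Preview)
Your argument is correct and is the standard sequential characterization of $\Gamma$-limits in first-countable spaces. The paper itself does not prove this proposition: it explicitly omits the proof and refers the reader to \cite{xiong2024convergence}. Hence there is no in-paper argument to compare against; your diagonal construction using a nested countable neighborhood base is precisely the classical route one would expect such a reference to contain.
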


As mentioned in the introduction, OT-Flow utilizes a neural network to parameterize the velocity field and optimize the cost functional in \eqref{eq:Cost1}. However, training the model directly with the cost function is not feasible as we only have discrete samples from $\rho_{0}$. To address this, one can approximate the cost function using Monte Carlo methods, which needs to rewrite cost function in the form of expectation over $\rho_{0}$. According to \cite{onken2021ot}, one may simplify the KL divergence term with density relationship \eqref{original density formulation} and drop constant in the formulation. The final cost function $J$ of OT-Flow gives as follows:
\begin{equation}\label{eq:OT-Flow}
\begin{split}
    J&=\mathbb{E}_{\rho_{0}(x)} \left[ C(x,T)+\frac{2}{\alpha}L(x,T) \right],\\
    C(x,T)&=-\ell(x,T) +\frac{1}{2}|\boldsymbol{z}(\boldsymbol{x}, T)|^{2}+\frac{d}{2} \log (2 \pi),\\
    L(x,T)&=\int_{0}^{T} \frac{1}{2}|v(z(x, t), t)|^{2} d t.
\end{split}
\end{equation}
In particular, from the Pontryagin Maximum Principle \cite{evans1983introduction}, there exists a potential function $\Phi: \mathbb{R}^d \times [0,T] \rightarrow \mathbb{R}$ such that
\begin{equation*}
    v(x,t)=-\nabla \Phi(x,t)
\end{equation*}
Thus OT-Flow parameterize $\Phi$ with a neural network instead of $v$ in practice. We will concentrate on using neural networks to directly approximate the velocity field for the sake of notation convenience, which will have no impact on our analysis.
One should remember that the discretion of cost function will introduce variations to the minimizers as well. Hence we will then show that as sample number $N\to\infty$, the optimal solutions of neural network will converges to the theoretical minimizers of the cost functional in section \ref{sec: convergence of N}.

\section{Convergence from OT-Flow problem to optimal transport problem}\label{Convergence from OT-Flow to OT}

In this section, we will consider the OT flow and the optimal transport problems for given $\rho_0\in \mathcal{P}(D)$ and $\rho_1\in \mathcal{P}(D)$.  For the convenience of proof, we can rewrite the OT problem \eqref{minimization problem:OT m rho} in the following form to keep the constraints consistent:
\begin{equation}
\label{minimization problem:OT indicator}
\begin{array}{ll}
\min\limits_{\rho, m} & \int_{0}^{T} \int_{D}\frac{|m|^2}{\rho} dx dt +\mathbf{1}_E , \\
\text { s.t. } &\partial_{t} \rho +\nabla \cdot m=0\text { in } D \times [0,T] , \\
\text{with} & \rho(x,0)=\rho_{0}(x),\\
& m(x,t) \cdot n=0  \text{ on }  \partial D \times [0,T].
\end{array}
\end{equation}
where $E$ is the set of the terminal constraints
\begin{equation*}
    E:= \left\{ \rho \in X :  \rho (x,T)  =   \rho_1(x)  \right\},
\end{equation*}
and $\mathbf{1}_E(x)$ the indicator function for a set $E$ in $X$:
\begin{equation*}
\mathbf{1}_E(x)=\left\{\begin{array}{lr}
0, & \text { if } x \in E, \\
+\infty, & \text { otherwise. }
\end{array}\right.
\end{equation*}
The OT-Flow problem \eqref{minimization problem:OT-flow m rho  1/alpha} can be equivalently written as
\begin{equation}
\label{minimization problem:OT-flow m rho alpha}
\begin{array}{ll}
\min\limits_{\rho, m} & \int_{0}^{T} \int_{D}\frac{|m|^2}{\rho} dx dt+\alpha \mathbb{KL}\left[\rho(x,T)||\rho_1(x)\right], \\
\text { s.t. } &\partial_{t} \rho +\nabla \cdot m=0\text { in } D \times [0,T] , \\
\text{with} & \rho(x,0)=\rho_{0}(x),\\
& m(x,t) \cdot n=0  \text{ on }  \partial D \times [0,T].
\end{array}
\end{equation}
The reason to choose this form because it is more close to the optimal transport problem \eqref{minimization problem:OT indicator}.

Our goal is to prove that the minimization problem \eqref{minimization problem:OT-flow m rho alpha} is $\Gamma-$convergent to the minimization problem \eqref{minimization problem:OT indicator} when $\alpha \to \infty$. Next, we need to make the framework
of the problems rigorous, like the topology of the space and the significance of the constraint etc.

\subsection{The topological properties of the working spaces}

We will focus on the properties of the space we will work on: 
\begin{equation*}
\begin{split}
& X = L^{1}([0,T];\mathcal{W}_{2}(D))
    :=\left\{\rho: [0, T]\to \mathcal{W}_{2}(D)) \mid \int_0^T \int_{D}|x|^2\rho_t(dx)dt<\infty \right\},\\
& \quad Y = L^{1}([0,T];\mathscr{M}^d(D))
:=\left\{m: [0, T]\to \mathscr{M}^d(D)) \mid
\int_0^T\|m\|_{D}dt<\infty \right\}. 
\end{split}
\end{equation*}
Here, $\mathscr{M}(D)$ is the set of signed measures on $D$
and $\mathscr{M}^d(D)$ is the set of vector-valued signed measures on $D$ with $d$ components. The notation $\|m\|_{D}$ indicates the total variation norm which we explain below. 

Let $C_b$ be the set of all continuous functions on $D$ (the subindex "$b$" indicates that it is bounded, which is natural since continuous functions on bounded set are bounded). Let $C_b^1$ be the set of all continuously differentiable functions on $D$ and the first order derivatives are bounded on $D$. Without explicitly stated, $C_b$ is equipped with the uniform convergence norm. Under the uniform convergence norm
\[
\|\varphi\|:=\|\varphi\|_{\infty}=\sup_{x\in D}|\varphi(x)|,
\]
$C_b^1$ is a dense subset of $C_b$ (of course, if one considers the norm $\|\varphi\|_{\infty}+\|D\varphi\|_{\infty}$, $C_b^1$ itself is complete). 

For a measure $\mu \in \mathscr{M}(D)$, the total variation norm is the dual operator norm over $C_b$:
\begin{gather*}
\|\mu\|_{D}:=\sup_{\varphi\in C_b, \|\varphi\|\le 1}\int_{D}\varphi \mu(dx).
\end{gather*}
Similarly, if $f$ is a vector-valued measure, the total variation norm is defined by
\begin{gather*}
\|f\|_{D}:=\sup\left\{\int_{D}g\cdot f(dx):  g: D\to \R^d 
~\text{continuous}, \sup_{x}|g(x)|\le 1 \right\}.
\end{gather*}
Here, $|g(x)|$ is the Euclidean length.
If $\mu$ and $f$ are absolutely continuous to some nonnegative measure $\lambda$ (for example, the Lebesgue measure $dx$), then the total variation norms for the scalar measure $\mu$ and vector valued measure $f$ on $D$ are as follows: 
\begin{equation*}
\|\mu\|_{D}=\int_{D} |\mu(x)| \ d\lambda , \ \ \| f \|_{D}=\int_{D} |f(x)|_2 \ d\lambda,
\end{equation*}
where we abuse the notations for convenience and understand $\mu(x)=d\mu/d\lambda$ and $f(x)=df/d\lambda$ as the densities with respect to $\lambda$. The notation $|f(x)|_2$ is the Euclidean length of the vector $f(x)$.

In our case, when $m$ and $\rho$ are curves in $\mathcal{W}_2$ and $\mathscr{M}^d(D)$, the total variation norms could be taken for each $t\in [0, T]$. In our case $\rho$ is a probability measure, so $\|\rho_t\|_{D}=1$ for each $t$, and thus $\int |x|^2 \rho(dx)<\infty$ since $D$ is bounded. Then, for every measurable curve taking values in $\mathcal{P}(D)$, one then has 
\begin{gather*}
\|\rho\|=\int_0^T \|\rho(t)\|_{D} dt=T, \quad \int_0^T \int_{D} |x|^2 \rho(dx)\,dt<\infty.
\end{gather*}
The norm for $m \in Y$ is then
\begin{equation*}
\|m\|=\int_0^T \|m(t)\|_{D} dt.
\end{equation*}

Similar as in \cite{xiong2024convergence}, we equip $X$ and $Y$ with the weak topology respectively: we say $\rho_n \Rightarrow \rho$ in $X$ if
\begin{equation*}
    \int_0^T \int_{D} \varphi d\rho_{n} \rightarrow \int_0^T \int_{D} \varphi d\rho, \forall \varphi \in C_{b}(\bar{D} \times [0,T];\mathbb{R}),
\end{equation*}
and $m_n \Rightarrow m$ in $Y$ if 
\begin{equation*}
    \int_0^T \int_{D} g \cdot dm_{n} \rightarrow \int_0^T \int_{D} g \cdot dm, \forall  g \in C_{b}(\bar{D} \times [0,T];\mathbb{R}^d) .
\end{equation*}
(Note that there is a typo in Eqn. (3.18)  Page 758 of \cite{xiong2024convergence} where $C_b^1$ should be $C_b$.) The weak topology used here as the dual of the class of $C_b$ functions is standard in literature. If both $m_n$ and $\rho_n$ uniformly bounded in total variation, then one can replace the test functions from $C_b$ to $C_b^1$ in \eqref{def:topology of product space} since the set $C_b^1$ is dense in $C_b$ under the topology of uniform convergence.

With the weak topology introduced, we have the following fact about the space.
\begin{proposition}
Both $X$ and $Y$ are closed subspaces of the signed measures and vector-valued signed measures with respect to the weak topology in the sense that
\begin{enumerate}[(i)]
\item If $\rho^{(n)}\in X$ and there is some $\rho: [0, T]\to \mathscr{M}(D)$ such that $\rho^{(n)} \Rightarrow \rho$, then $\rho\in X$. 
\item If $m^{(n)}\in Y$ and there is some $m: [0, T]\to \mathscr{M}^d(D)$ such that $m^{(n)} \Rightarrow m$, then $m\in Y$.
\end{enumerate}

\end{proposition}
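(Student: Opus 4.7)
My plan for both parts is to test the weak limit against tensor-product functions to extract slicewise information, then invoke classical facts about measures on the compact set $[0,T]\times\bar D$.

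For (i), I aim to upgrade $\rho_t\in\mathscr{M}(D)$ to $\rho_t\in\mathcal{P}(D)$ for almost every $t$; once this holds, $\rho_t\in\mathcal{W}_2(D)$ and the $L^1$-in-$t$ second-moment integrability are automatic because $D$ is bounded and so $|x|^2$ is bounded on $\bar D$. I will test against $\varphi(x,t)=\psi(t)\phi(x)$ with $\psi\in C_b([0,T])$ and $\phi\in C_b(\bar D)$. Taking $\phi\equiv 1$ and $\psi$ arbitrary, the left-hand side of the weak-convergence relation equals $\int_0^T\psi(t)\,dt$ for every $n$ since $\rho^{(n)}_t(D)=1$, so $\int_0^T\psi(t)\,(\rho_t(D)-1)\,dt=0$ for all $\psi$ and hence $\rho_t(D)=1$ almost everywhere. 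For non-negativity, I will test against $\phi\ge 0$ drawn from a countable dense family in $C_b(\bar D)_{\ge 0}$ and $\psi\ge 0$; a common null-set argument delivers $\int\phi\,d\rho_t\ge 0$ for every $\phi$ in the dense family off a single null set, which extends to all non-negative $\phi\in C_b(\bar D)$ by density.

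For (ii), the aim is $\int_0^T\|m_t\|_{D}\,dt<\infty$. I will identify each $m^{(n)}$ with the vector-valued measure $\tilde m^{(n)}$ on the compact set $[0,T]\times\bar D$ defined by $\int G\,d\tilde m^{(n)}=\int_0^T\int_D G(t,x)\cdot dm^{(n)}_t(x)\,dt$. A measurable-selection argument shows $\|\tilde m^{(n)}\|_{TV}=\int_0^T\|m^{(n)}_t\|_{D}\,dt<\infty$, and the hypothesis $m^{(n)}\Rightarrow m$ translates exactly to weak-$*$ convergence $\tilde m^{(n)}\to\tilde m$ as continuous linear functionals on the Banach space $C([0,T]\times\bar D,\R^d)$.

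The main obstacle in (ii) is that the hypothesis supplies no a priori uniform bound on $\|\tilde m^{(n)}\|_{TV}$. Banach-Steinhaus fills exactly this gap: a pointwise-convergent sequence of bounded linear functionals on a Banach space is uniformly norm-bounded, so $M:=\sup_n\|\tilde m^{(n)}\|_{TV}<\infty$. Lower semicontinuity of total variation under weak-$*$ convergence — obtained by taking supremum over $|G|\le 1$ in the limit relation — then yields $\|\tilde m\|_{TV}\le M$, i.e., $\int_0^T\|m_t\|_{D}\,dt<\infty$, so $m\in Y$.
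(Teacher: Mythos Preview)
Your proposal is correct and follows essentially the same route as the paper: both arguments hinge on the Banach--Steinhaus theorem to obtain a uniform total-variation bound on the approximating sequence (which is the key missing ingredient, since weak convergence alone supplies no a priori bound), together with a countable-density/separability argument to pass from the product-space weak limit to slicewise conclusions for almost every $t$. Your treatment is somewhat more explicit than the paper's---in part~(i) you test directly against tensor products $\psi(t)\phi(x)$ rather than invoking separability of $C_b(\bar D\times[0,T])$ abstractly, and in part~(ii) you spell out the identification with measures on the product space and the equality $\|\tilde m\|_{TV}=\int_0^T\|m_t\|_D\,dt$---but the underlying mechanism is the same.
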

\begin{proof}

Consider a sequence $\{\rho^{(n)}\} \in X$ such that $\rho_n \Rightarrow \rho$. First, since $C_b(\bar{D}\times [0, T])$ is a complete metric space, one can obtain by the Banach-Steinhaus theorem that
\[
\sup_n \|\rho^{(n)}\|<\infty.
\]
Then, one finds that
\[
|\int_0^T\int_{D} \varphi \rho(dx) dt|=\lim_{n\to\infty}|\int_0^T\int_{D} \varphi \rho^{(n)}(dx) dt|\le \liminf_{n\to\infty}\|\rho^{(n)}\|\|\varphi\|.
\]
This indicates that $\|\rho\|_D<+ \infty$.  
Since $C_b(\bar{D}\times [0, T])$ is separable, we can then find a countable dense set $\{\varphi_m\}\subset C_b$ such that for all $\varphi_m$ and almost every $t\in [0, T]$ such that
\[
\int_{D}\varphi_m(t)\rho_t^{(n)}(dx)\to  \int_{D}\varphi_m(t)\rho_t  (dx).
\]
This indicates that in fact for all $C_b(D)$ functions, this weak convergence holds for these $t$. Consequently, $\rho_t\in \mathcal{P}(D)$ for a.e. $t\in [0, T]$. Hence, we find find a version of $\rho$ such that $\rho_t\in \mathcal{P}(D)$ for all $t$.  Since $D$ is bounded, one then finds that
\begin{equation*}
    \int_0^1\int_{D} |x|^2 \rho(dx) dt < +\infty. 
\end{equation*}
Thus $\rho \in L^{1}([0,T];\mathcal{W}_{2}(D))$. 

For the space $Y$, it is straightforward by the Banach-Steinhaus theorem and the argument similarly above.  Hence, both $X$ and $Y$ are closed under weak topology.
\end{proof}

\subsection{Lower semi-continuity of the functionals}\label{sec:Lower semi-continuity}

First of all, we equip the product space $X \times Y$ for $(\rho,m)$ with the product topology and then the weak topology $(\rho_n,m_n) \Rightarrow (\rho,m)$ is understood as:  $\forall f \in C_{b}(\bar{D} \times [0,T];\mathbb{R})$, $g \in C_{b}(\bar{D} \times [0,T];\mathbb{R}^d)$, one has
\begin{equation}\label{def:topology of product space}
    \int_0^T\int_{D} f d\rho_{n}+\int_0^T\int_{D} g \cdot dm_{n} \rightarrow \int_0^T\int_{D} f d\rho+\int_0^T\int_{D} g \cdot dm.
\end{equation}

Next, we also rewrite the continuity equation in the weak sense as dual of $C_b^1$. In particular, we can define the subspace $\mathcal{H}$ of $X \times Y$ as following
\begin{equation*}
    \begin{aligned}
        \mathcal{H}:=\Bigg\{ (\rho,m) \in X \times Y: & -\int_0^T \int_{D}(\partial_t \varphi) \rho +\nabla \varphi \cdot m \ dxdt \\
        &+\int_{D} \varphi(x,T)\rho_{1}(x)-\varphi(x,0)\rho_{0}(x) dx =0, \forall \varphi \in C_{b}^{1}(\bar{D} \times [0,T]) \Bigg\}.
    \end{aligned}
\end{equation*}
Clearly $\mathcal{H}$ is closed due to linear constraints.

Corresponding to the optimization problems \eqref{minimization problem:OT indicator} and \eqref{minimization problem:OT-flow m rho alpha}, one naturally aims to introduce the functionals $F_{\alpha}$ and $F_{\infty}$ defined on $\mathcal{H}$ by:
\begin{equation*}
\begin{split}
&  F_{\infty}(\rho,m)=\int_0^T\int_{D} \frac{|m|^2}{\rho} dxdt+ \mathbf{1}_E,\\
& F_{\alpha}(\rho,m)=\int_0^T\int_{D} \frac{|m|^2}{\rho} dxdt+\alpha \mathbb{KL}\left[\rho(x,T)||\rho_1(x)\right].
\end{split}
\end{equation*}
Here we have the expressions like $|m|^2/\rho$ for measures $\rho$ and $m$, which must be defined. For this purpose, we recall the Benamou-Brenier functionals for $\rho \in \mathscr{M}(X)$ and $m \in \mathscr{M}^{d}(X)$:
\begin{equation*}
    \mathscr{B}_p(\rho, m):= \sup \left\{ \int_{X} a d\rho + \int_{X} b \cdot dm  : (a,b) \in C_b(X;K_q) \right\},
\end{equation*}
where $K_q:=\left\{(a, b) \in \mathbb{R} \times \mathbb{R}^d: a+\frac{1}{q}|b|^q \leq 0\right\}$ and $\frac{1}{p}+\frac{1}{q}=1$. It has the following characterizations.
\begin{lemma}\label{Thm:Benamou-Brenier formula}
    The functional  $\mathscr{B}_p$ is convex and lower semi-continuous on the space $\mathscr{M}(X) \times \mathscr{M}^{d}(X)$ for the weak convergence. Moreover, the following properties hold:
    \begin{itemize}
        \item  $\mathscr{B}_p(\rho,m) \geq 0$
        \item if both $\rho$ and $m$ are absolutely continuous with respect to a same positive measure $\lambda$ on $X$, we can write $\mathscr{B}_p(\rho,m)=\int_X f_{p}(\rho(x),m(x))d \lambda(x)$, where we identify $\rho(x)$ and $m(x)$ are the densities with respect to $\lambda$, and $f_p:\mathbb{R} \times \mathbb{R}^d \to \mathbb{R} \cup \{\infty\}$ is defined as:
        \begin{equation*}
        f_p(t, x):=\sup _{(a, b) \in K_q}(a t+b \cdot x)= \begin{cases}\frac{1}{p} \frac{|x|^p}{t^{p-1}} & \text { if } t>0 \\ 0 & \text { if } t=0, x=0 \\ +\infty & \text { if } t=0, x \neq 0, \text { or } t<0\end{cases}
        \end{equation*}
        \item $\mathscr{B}_p(\rho,m) < +\infty$ on if $\rho \geq 0$ and $m \ll \rho $ 
        \item for $\rho \geq 0$ and $m \ll \rho$, we have $m=v \cdot \rho$ and $\mathscr{B}_p(\rho,m)=\int \frac{1}{p} |v|^{p}d \rho$
    \end{itemize}
\end{lemma}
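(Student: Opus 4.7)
The plan is to unpack the four bullets in turn, exploiting the fact that $\mathscr{B}_p$ is presented as the supremum of a family of continuous linear functionals, supplemented by a standard duality result for integral functionals with convex integrands.

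For convexity and lower semi-continuity, I would note that for each fixed $(a,b) \in C_b(X; K_q)$ the map $(\rho,m) \mapsto \int_X a\,d\rho + \int_X b\cdot dm$ is linear and continuous in the weak topology on $\mathscr{M}(X) \times \mathscr{M}^d(X)$, so the pointwise supremum $\mathscr{B}_p$ is automatically convex and lower semi-continuous. Non-negativity in the first bullet follows by testing with $(a,b) = (0,0)$, which lies in $K_q$ since $0 + \tfrac{1}{q}|0|^q = 0 \le 0$.

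For the explicit form of $f_p$, I would solve the finite-dimensional constrained optimization pointwise. For $t > 0$ the constraint must be saturated at the optimum, i.e.\ $a = -\tfrac{1}{q}|b|^q$; optimizing $-\tfrac{t}{q}|b|^q + b\cdot x$ over $b \in \mathbb{R}^d$, using $1/p + 1/q = 1$ and in particular $(p-1)q = p$, yields the optimizer $b = (|x|/t)^{p-1}\hat x$ and the value $\tfrac{1}{p}|x|^p/t^{p-1}$. For $t = 0$ with $x \ne 0$, the pair $(a,b) = (-\lambda^q/q,\lambda \hat x)$ drives the objective to $+\infty$ as $\lambda \to \infty$; for $t < 0$, the choice $b = 0$ with $a \to -\infty$ does the same. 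The degenerate case $t = 0 = x$ trivially gives $0$.

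The third bullet is the main obstacle. The easy inequality $\mathscr{B}_p(\rho,m) \le \int f_p(\rho,m)\,d\lambda$ is pointwise domination of the integrand by the support function of $K_q$. For the reverse inequality one must realize the pointwise maximum by \emph{continuous} selections $(a,b) \in C_b(X; K_q)$, which I plan to carry out by the standard three-step routine: approximate $(\rho(\cdot),m(\cdot))$ by simple functions on a Borel partition of $X$; on each piece pick a near-optimal constant pair in $K_q$ via the computation above; and then mollify the piecewise-constant selection, using convexity and closedness of $K_q$ to keep the mollified pair admissible. The singular case where $f_p = +\infty$, namely $\rho = 0$ on a set where $m \ne 0$, or $\rho < 0$ on a set of positive $\lambda$-measure, is handled by the divergent sequences already identified in the pointwise computation, so that $\mathscr{B}_p = +\infty$ matches $\int f_p\,d\lambda$. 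Alternatively, this is precisely the content of the Bouchitt\'e--Buttazzo representation theorem for convex integral functionals, which I would invoke as a black box rather than reprove.

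The last two bullets then follow immediately from the third applied with $\lambda := \rho$. Writing $m = v\rho$ via the Radon--Nikodym theorem (which is legitimate whenever $\rho \ge 0$ and $m \ll \rho$), the densities of $\rho$ and $m$ with respect to $\rho$ are $1$ and $v$ respectively, so
\begin{equation*}
\mathscr{B}_p(\rho,m) = \int_X f_p\bigl(1,v(x)\bigr)\,d\rho(x) = \frac{1}{p}\int_X |v|^p\,d\rho,
\end{equation*}
which yields both the formula of the fifth bullet and the finiteness asserted in the fourth (the latter being understood under $v \in L^p(\rho)$).
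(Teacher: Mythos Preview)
The paper does not give its own proof of this lemma; it simply refers the reader to Santambrogio, \emph{Optimal Transport for Applied Mathematicians}, Proposition~5.18. Your proposal is essentially correct and follows the standard route one finds there: convexity and lower semi-continuity from the sup-of-linear-functionals structure, the pointwise Legendre computation for $f_p$, and the passage from measurable to continuous test pairs $(a,b)$ via approximation (or the Bouchitt\'e--Buttazzo machinery) for the integral representation.

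Two small points of organization. First, your numbering is off by one: what you call ``the third bullet'' (the integral representation $\mathscr{B}_p = \int f_p\,d\lambda$) is the second bullet in the statement. Second, and more substantively, the actual third bullet is the \emph{only if} direction: finiteness of $\mathscr{B}_p$ forces $\rho \ge 0$ and $m \ll \rho$. For this you cannot take $\lambda = \rho$ a priori, since neither non-negativity of $\rho$ nor absolute continuity of $m$ is yet known; instead one takes a common dominating measure such as $\lambda = |\rho| + |m|$, applies the integral representation, and reads off from the cases of $f_p$ that the integrand would be $+\infty$ on a set of positive $\lambda$-measure otherwise. You do have this argument in your discussion of the ``singular case,'' so the content is present, but your final paragraph's claim that ``the last two bullets follow from the third applied with $\lambda := \rho$'' conflates bullets three and four and should be disentangled.
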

The detailed proof can be found in  \cite[Charp. 5, Proposition 5.18]{santambrogio2015optimal}. With the help of Benamou-Brenier functionals, we find that the the rigorous definitions of the functionals should be
\begin{equation*}
     F_{\infty}(\rho,m)=\int_0^T \mathscr{B}_2(\rho_t,m_t) dt+ \mathbf{1}_E,
     \quad \forall (\rho, m)\in \mathcal{H}.
\end{equation*}
\begin{equation*}
    F_{\alpha}(\rho,m)=\int_0^T \mathscr{B}_2(\rho_t,m_t) dt+\alpha \mathbb{KL}\left[\rho(x,T)||\rho_1(x)\right] \quad \forall (\rho, m)\in \mathcal{H}.
\end{equation*}
Then, the OT-flow problem \eqref{minimization problem:OT-flow m rho  1/alpha} is formulated by
\begin{equation}\label{eq:otflowrigor}
\min_{(\rho, m)\in \mathcal{H}}F_{\alpha}(\rho, m),
\end{equation}
and the OT problem \eqref{minimization problem:OT m rho} is given by:
\begin{equation}\label{eq:otrigor}
\min_{(\rho, m)\in \mathcal{H}}F_{\infty}(\rho, m).
\end{equation}

We note that the functionals just introduced enjoy good properties. 
\begin{proposition}
 Both $F_{\alpha}$ and $F_{\infty}$ are convex and lower semi-continuous. 
\end{proposition}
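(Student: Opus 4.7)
The plan is to decompose each functional into a dynamic piece $G(\rho,m):=\int_0^T \mathscr{B}_2(\rho_t,m_t)\,dt$ and a terminal piece (the indicator $\mathbf{1}_E$ for $F_\infty$ and the scaled KL divergence for $F_\alpha$), and then verify convexity and lower semi-continuity for each piece separately; since both properties survive finite sums and scaling by $\alpha>0$, this is enough. For the dynamic term, I would regard $(\rho,m)$ as a scalar and vector-valued measure on the space-time product $\bar D\times[0,T]$ by disintegration against Lebesgue time in $[0,T]$. Under this identification $G(\rho,m)$ is precisely the Benamou--Brenier functional $\mathscr{B}_2$ computed on the enlarged base $\bar D\times[0,T]$, and Lemma \ref{Thm:Benamou-Brenier formula} then yields convexity and l.s.c.\ of $G$ in the product weak topology of $X\times Y$ for free.

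For the terminal pieces I would first isolate a common ingredient: the evaluation map $(\rho,m)\mapsto \rho(\cdot,T)$ is weakly continuous when restricted to $\mathcal{H}$, taking values in $\mathcal{P}(\bar D)$ equipped with its weak topology. Granting this, $E$ is an affine (hence convex) subset of $X$ and $E\cap\mathcal{H}$ is weakly closed, so $\mathbf{1}_E$ is convex and l.s.c.\ on $\mathcal{H}$. Similarly, the KL divergence $\mu\mapsto \mathbb{KL}[\mu\|\rho_1]$ is convex and l.s.c.\ with respect to weak convergence of probability measures (classical, via its Donsker--Varadhan dual characterization), so its composition with the weakly continuous terminal-slice map inherits both properties, and multiplication by $\alpha>0$ preserves them.

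The main obstacle will be the terminal-slice continuity itself, since weak convergence in $X=L^1([0,T];\mathcal{W}_2(D))$ does not a priori control individual time slices. Here the continuity equation built into $\mathcal{H}$ is essential: for any fixed $\varphi\in C_b^1(\bar D)$, testing the weak continuity equation with a smooth regularisation of $\varphi(x)\mathbf{1}_{[0,t]}(s)$ yields the representation
\begin{equation*}
\int_D \varphi\,d\rho_t \;=\; \int_D \varphi\,d\rho_0 \;-\; \int_0^t\!\!\int_D \nabla\varphi \cdot dm_s\,ds,
\end{equation*}
which shows that $t\mapsto \int_D \varphi\,d\rho_t$ admits an absolutely continuous representative and that $\int_D \varphi\,d\rho_n(\cdot,T)\to \int_D \varphi\,d\rho(\cdot,T)$ whenever $(\rho_n,m_n)\Rightarrow(\rho,m)$ in $X\times Y$. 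A density argument, using that $C_b^1(\bar D)$ is dense in $C_b(\bar D)$ under uniform convergence and that the total variations $\|\rho_n(\cdot,T)\|_{\bar D}=1$ are uniformly bounded, then upgrades this to weak convergence in $\mathcal{P}(\bar D)$. Once the slice-continuity is in hand, the convexity and l.s.c.\ of $F_\alpha$ and $F_\infty$ follow from the ingredients outlined above.
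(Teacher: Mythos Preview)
Your proposal is correct and for the dynamic piece it is essentially what the paper does: the paper simply spells out the $\epsilon$-argument for the supremum-of-linear-functionals representation of $\mathscr{B}_2$, whereas you package the same content by lifting $(\rho,m)$ to measures on $\bar D\times[0,T]$ and invoking Lemma~\ref{Thm:Benamou-Brenier formula} once.

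The genuine difference is in the terminal piece. The paper treats the KL part by saying it ``follows from a similar argument'' via the Donsker--Varadhan dual formula (Appendix~\ref{appendix:Lower semi-continuity of KL divergence}), but that appendix only shows lower semi-continuity of $\mu\mapsto \mathbb{KL}[\mu\|\rho_1]$ under weak convergence of $\mu$; it never explains why weak convergence of $(\rho^n,m^n)$ in $X\times Y$ forces weak convergence of the time-$T$ slice $\rho^n(\cdot,T)$. You correctly identify this as the nontrivial point and supply the missing step: the continuity-equation identity on $\mathcal{H}$ gives
\[
\int_D \varphi\,d\rho_T \;=\; \int_D \varphi\,d\rho_0 \;+\; \int_0^T\!\!\int_D \nabla\varphi\cdot dm_s\,ds
\qquad(\varphi\in C_b^1(\bar D)),
\]
(note the sign should be $+$, not $-$), which expresses the terminal pairing as a fixed constant plus a weakly continuous linear functional of $m$; density of $C_b^1$ in $C_b$ and $\|\rho^n(\cdot,T)\|_{\bar D}=1$ then finish the job. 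This buys you not only the l.s.c.\ of the KL term but also, for free, the closedness of $E\cap\mathcal{H}$ and hence the l.s.c.\ of $\mathbf{1}_E$, which the paper does not address at all. Your route is thus slightly longer but closes a gap the paper leaves open.
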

\begin{proof}
These two properties follow directly from those for the KL divergence and the 
Benamou-Brenier functional.

The convexity of these functionals are well-known. Here, we sketch the verification of the lower semi-continuity for the convenience of the readers (as the lower semi-continuity is essential in this work).  We take $F_{\alpha}$ as the example. 

If $\int_0^T \mathscr{B}_2(\rho_t,m_t)dt=+\infty$, then for each $M>0$, there is some $(\bar{a}, \bar{b})\in C_b(\bar{D}\times [0, T])$ such that
\[
\int_0^T \int_{D}  (\bar{a} \rho + \bar{b} \cdot m) \ dx dt>M.
\]
Consider any sequence $(\rho^n, m^n)\Rightarrow (\rho, m)$, by the weak convergence, one then has
\[
\lim_{n\to\infty}\int_0^T \int_{D}  (\bar{a} \rho^n(dx) + \bar{b} \cdot m^n(dx)) \, dt>M.
\]
This indicates that 
\[
\lim_{n\to\infty}\int_0^T \mathscr{B}_2(\rho_t^n,m_t^n)dt
\ge \lim_{n\to\infty}\int_0^T \int_{D}  (\bar{a} \rho^n(dx) + \bar{b} \cdot m^n(dx)) \, dt>M.
\]
Since $M$ is arbitrary, one then has $\lim_{n\to\infty}\int_0^T \mathscr{B}_2(\rho_t^n,m_t^n)dt=\infty$.

If $M:=\int_0^T \mathscr{B}_2(\rho_t,m_t)dt<+\infty$, then for any $\epsilon>0$, one may take 
$(\bar{a}, \bar{b})\in C_b(\bar{D}\times [0, T])$ such that
\[
\int_0^T \int_{D}  (\bar{a} \rho + \bar{b} \cdot m) \ dx dt<M+\epsilon.
\]
For any sequence $(\rho^n, m^n)\Rightarrow (\rho, m)$, by the weak convergence, one has
\begin{equation*}
    \begin{aligned}
            \int_0^T \mathscr{B}_2(\rho_t, m_t) dt  &< \int_0^T \int_{D}  (\bar{a} \rho(dx) + \bar{b} \cdot m(dx)) \ dt + \epsilon\\
            & \leq \liminf\limits_{n \to \infty}  \int_0^T \int_{D}  (\bar{a} \rho^{n}(dx) + \bar{b} \cdot m^{n}(dx)) \ dt + \epsilon\\
            & \leq \liminf\limits_{\alpha \to \infty}  \sup _{(a, b) \in C_b((D \times [0,T];K_2)}  \int_0^T \int_{D} (a \rho^{n}(dx) +b \cdot m^{n}(dx)) \ dt+ \epsilon \\
            & = \liminf\limits_{\alpha \to \infty} \int_0^T \mathscr{B}_2(\rho_t^n,m_t^n) dt + \epsilon.
    \end{aligned}
\end{equation*}
Since $\epsilon$ is arbitrary,  the lower semi-continuity of the first part in $F_{\alpha}$ is verified together.

The lower semi-continuity of KL divergence follows from a similar argument if one uses the weak form of the KL divergence (see appendix \ref{appendix:Lower semi-continuity of KL divergence}).    
\end{proof}

Though the space we consider is simply $L^1$ in time, We find that the time regularity of $\rho$ is actually good for feasible points.
\begin{proposition}
Fix $\alpha>0$ or $\alpha=\infty$. If $(\rho, m)$ is a feasible solution of the optimization problem \eqref{eq:otflowrigor} or \eqref{eq:otrigor}, then there is a version of $\rho$ such that $t\mapsto (\rho)_t$ is absolutely continuous in $W_2(D)$.  Moreover, $m \ll \rho$ and the Radon-Nikodym derivative
    \[
    v=\frac{dm}{d\rho}
    \]
    satisfies that $v\in L^1([0, T]; L^2(\rho_t))$.
\end{proposition}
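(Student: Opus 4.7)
The plan is to unpack what feasibility of $(\rho,m)$ forces on the Benamou-Brenier functional, extract the velocity $v$, and then apply a standard result linking the continuity equation to absolute continuity in $W_2$.

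First I would observe that feasibility (for either $\alpha>0$ or $\alpha=\infty$) requires $F_\alpha(\rho,m)<+\infty$, and in particular
\[
\int_0^T \mathscr{B}_2(\rho_t,m_t)\,dt <+\infty.
\]
Consequently $\mathscr{B}_2(\rho_t,m_t)<\infty$ for a.e.\ $t\in[0,T]$. By Lemma~\ref{Thm:Benamou-Brenier formula}, for every such $t$ one has $m_t\ll \rho_t$, and writing $v_t:=dm_t/d\rho_t$ gives the pointwise identity $\mathscr{B}_2(\rho_t,m_t)=\tfrac{1}{2}\int_D |v_t|^2\,d\rho_t$. This globally defines a Borel vector field $v:[0,T]\times D\to \R^d$ with $m = v\rho$, and
\[
\int_0^T \|v_t\|_{L^2(\rho_t)}^2\,dt = 2\int_0^T \mathscr{B}_2(\rho_t,m_t)\,dt <+\infty,
\]
so $v\in L^2([0,T];L^2(\rho_t))$. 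A single application of Cauchy--Schwarz in $t$,
\[
\int_0^T \|v_t\|_{L^2(\rho_t)}\,dt \le \sqrt{T}\left(\int_0^T \|v_t\|_{L^2(\rho_t)}^2\,dt\right)^{1/2},
\]
upgrades this to $v\in L^1([0,T];L^2(\rho_t))$, which is the second claim.

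For the regularity in time, I would rewrite the constraint in $\mathcal{H}$ using $m=v\rho$ as the weak continuity equation $\partial_t \rho_t + \nabla\cdot(v_t\rho_t)=0$ with no-flux boundary conditions. This is exactly the setting in which the standard result on absolutely continuous curves in Wasserstein space applies (Santambrogio, Chap.~5; Ambrosio--Gigli--Savar\'e): whenever $\rho$ solves the continuity equation against a vector field with $\int_0^T\|v_t\|_{L^2(\rho_t)}\,dt<\infty$, there is a version of $t\mapsto \rho_t$ that is absolutely continuous in $(W_2(D),W_2)$, with metric derivative bounded by $\|v_t\|_{L^2(\rho_t)}$. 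Redefining $\rho$ on the Lebesgue-null set where this fails yields the desired continuous version.

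The only delicate points are (i) verifying that $v$ is genuinely measurable in $(t,x)$, not just defined $t$-a.e., which follows from the fact that $\mathscr{B}_2$ admits the explicit pointwise density representation in Lemma~\ref{Thm:Benamou-Brenier formula} applied to $\rho,m$ considered jointly absolutely continuous with respect to $\rho\otimes dt + m\otimes dt$, and (ii) the boundary condition $v\cdot n=0$ on $\partial D$: since $D$ is bounded with smooth boundary and the continuity equation in $\mathcal{H}$ is tested against $C_b^1(\bar D\times[0,T])$ (not just compactly supported test functions), the no-flux condition is encoded in the weak formulation and the Ambrosio--Gigli--Savar\'e result can be applied on $\bar D$ without modification. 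I expect the main obstacle to be bookkeeping this measurability/boundary issue carefully when invoking the classical theorem, rather than any genuinely new analytical input.
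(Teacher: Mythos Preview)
Your proposal is correct and follows essentially the same route as the paper: both extract $v_t=dm_t/d\rho_t$ from finiteness of $\int_0^T\mathscr{B}_2(\rho_t,m_t)\,dt$ via Lemma~\ref{Thm:Benamou-Brenier formula}, observe $\int_0^T\|v_t\|_{L^2(\rho_t)}^2\,dt<\infty$, and then invoke the standard Santambrogio/AGS result linking the continuity equation to absolute continuity in $W_2$. The paper additionally sketches the flow-map argument behind that classical result, while you are a bit more explicit about the $L^2\to L^1$ step and the measurability/boundary bookkeeping, but these are presentational differences only.
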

\begin{proof}
    If $(\rho,m)$ is a feasible solution for either $F_{\alpha}$ or $F_{\infty}$, then 
    \begin{equation*}
        \int_0^T \mathscr{B}_2(\rho_t, m_t) dt < + \infty. 
    \end{equation*}
    By Lemma \ref{Thm:Benamou-Brenier formula},  for there is a set $E$ with Lebesgue measure zero such that  $t\in [0, T]\setminus E$, it holds that
    \begin{equation*}
        m_t \ll \rho_t,
    \end{equation*} 
    and we can construct the corresponding velocity field by 
    \begin{equation*}
        v_t=\frac{dm_t}{d\rho_t} . 
    \end{equation*}
Then, by Lemma \ref{Thm:Benamou-Brenier formula}, if we take $\lambda=\rho_t$, one then has for these $t$ that
\[
\mathscr{B}_2(\rho_t, m_t)=\int_{D}|v_t|^2 \rho_t(dx).
\]
Recall that $m\in L^1([0,1]; \mathscr{M}^d(D))$, then we can safely set $v_t=0$ for $t\in E$ and modify the value of $m$ on $E$ freely. Then, $m=\rho_t v_t$ holds for all $t\in [0, T]$. This means that $v_t\in L^1([0, T]; L^2(\rho_t))$
and the constraint $\partial_t\rho+\nabla\cdot m=0$ still holds for the modified version of $m$.
    
For the absolutely continuity of the feasible solution $\rho$ then follows from the constraint $\partial_t\rho+\nabla\cdot m=0$ in the weak sense (dual of $C_b^1$).
One can find the rigorous proof in \cite[Chap. 5, Theorem 5.14]{santambrogio2015optimal}. Here we provide a sketch of the proof. One can consider a segment of the curve, i.e. from $\rho_t$ to $\rho_{t+h}$. A natural transport plan from $\rho_t$ to $\rho_{t+h}$ can be given by the curve driven by $v_t$ :
    \begin{equation*}
        \gamma=(T_t,T_{t+h})_{\#}\rho_0,
    \end{equation*}
    where $T_t$ is defined by $\frac{d}{dt}T_t(x)=v_t(T_t(x))$. This plan provides an upper bound for $W_2\left(\rho_t, \rho_{t+h}\right)$:
    \begin{equation*}
        W_2\left(\rho_t, \rho_{t+h}\right) \leq\left(\int_{D \times D}|x-y|^2 d \gamma\right)^{1 / 2}=\left(\int_{D}\left|T_t(x)-T_{t+h}(x)\right|^2 d \rho_0\right)^{1 / 2}.
    \end{equation*}
    Using the fact that
    \begin{equation*}
        \left|T_t(x)-T_{t+h}(x)\right|^p \leq h \int_t^{t+h}\left|v_s\left(T_s(x)\right)\right|^2 d s ,
    \end{equation*}
   one has
    \begin{equation}\label{feasible solution:Lip rho}
        \frac{W_p\left(\rho_t, \rho_{t+h}\right)}{h} \leq\left(\frac{1}{h} \int_t^{t+h}\left\|v_s\right\|_{L^2\left(\rho_s\right)}^2 d s\right)^{1 / 2}.
    \end{equation}
    Since $\int_0^T \mathscr{B}_2(\rho,m) dt < + \infty$ automatically indicates 
    \begin{equation*}
        \int_0^T\|v\|_{L^2(\rho_t)}^2 dt < + \infty,
    \end{equation*}
    then \eqref{feasible solution:Lip rho} provides Lipschitz behavior for $\rho_t$, which directly leads to the fact $(\rho_t)_t$ is absolutely continuous in $\mathcal{W}_2(D)$.

\end{proof}


\subsection{$\Gamma$-convergence to optimal transport problems}

In this section, we aim to prove the $\Gamma$-convergence of OT-Flows to the optimal transport problems as $\alpha\to\infty$ so that the solutions could converge to the solution of the optimal transport problem.
We first show that the solutions (minimizers) indeed exist. 
\begin{proposition}\label{property of minizer}
Assume $D \subset \mathbb{R}^d$ is a bounded domain with smooth boundary, and $F_{\alpha}$ (resp. $F_{\infty}$) has at least a feasible point over $\mathcal{H}$. Then, \eqref{eq:otflowrigor} (resp. \eqref{eq:otrigor}) has global minimizers over $\mathcal{H}$. Moreover, the minimizers, as feasible points, satisfy that $\rho$ is absolutely continuous in $\mathcal{W}_2(D)$, and $m\ll \rho$.
\end{proposition}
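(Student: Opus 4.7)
The strategy is the direct method of the calculus of variations, applied in the weak topology on $X\times Y$. Throughout, write $F$ for $F_\alpha$ with $\alpha\in(0,\infty)$ or $F_\infty$; both are handled in parallel since both are convex and weakly lower semi-continuous, and the feasible set $\mathcal{H}$ is closed under weak convergence.

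First I would fix a feasible point $(\bar\rho,\bar m)\in\mathcal{H}$ (exists by hypothesis) and a minimizing sequence $(\rho^n,m^n)\in\mathcal{H}$ with $F(\rho^n,m^n)\downarrow \inf_{\mathcal{H}}F\le F(\bar\rho,\bar m)<\infty$. Tightness of the $\rho^n$ component is immediate: each $\rho^n_t$ is a probability measure on the bounded set $D$, so $\|\rho^n\|=T$ uniformly, and the weak-$\ast$ compactness of $\{\mu\in\mathscr{M}(\bar D\times[0,T]):\|\mu\|\le T\}$ produces a subsequence (not relabeled) with $\rho^n\Rightarrow\rho$ for some measure $\rho$ on $\bar D\times[0,T]$. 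By the first proposition of Section~3.1, this $\rho$ lies in $X$.

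Next I would bound $\|m^n\|$. Because $F(\rho^n,m^n)$ is bounded, $\int_0^T\mathscr{B}_2(\rho^n_t,m^n_t)dt$ is bounded, so Lemma~\ref{Thm:Benamou-Brenier formula} gives velocities $v^n_t$ with $m^n_t=v^n_t\rho^n_t$ and $\mathscr{B}_2(\rho^n_t,m^n_t)=\tfrac12\int_D|v^n_t|^2\rho^n_t(dx)$ a.e.\ $t$. Cauchy--Schwarz in $x$ (using $\rho^n_t(D)=1$) then time then yields
\begin{equation*}
\|m^n\|=\int_0^T\!\!\int_D|v^n_t|\,\rho^n_t(dx)\,dt\le \int_0^T(2\mathscr{B}_2(\rho^n_t,m^n_t))^{1/2}dt\le\sqrt{2T\,F(\rho^n,m^n)},
\end{equation*}
so $\{m^n\}$ is uniformly bounded in total variation on $\bar D\times[0,T]$. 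Weak-$\ast$ compactness of vector-valued bounded measures extracts a further subsequence with $m^n\Rightarrow m$ for some $m\in\mathscr{M}^d(\bar D\times[0,T])$; the closedness of $Y$ under weak convergence, established in Section~3.1, gives $m\in Y$.

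Now $(\rho^n,m^n)\Rightarrow(\rho,m)$ in the product topology. Since the continuity constraint defining $\mathcal{H}$ is linear in $(\rho,m)$ against test functions in $C_b^1$, passing to the limit in every such test yields $(\rho,m)\in\mathcal{H}$. By the previous proposition, $F$ is lower semi-continuous on $X\times Y$, so
\begin{equation*}
F(\rho,m)\le\liminf_{n\to\infty}F(\rho^n,m^n)=\inf_{\mathcal{H}}F,
\end{equation*}
which forces equality and makes $(\rho,m)$ a global minimizer. The announced regularity of the minimizer, namely $m\ll\rho$, the existence of the velocity field $v=dm/d\rho\in L^1([0,T];L^2(\rho_t))$, and the absolute continuity of $t\mapsto\rho_t$ in $\mathcal{W}_2(D)$, is then inherited from the preceding proposition about feasible points with finite action.

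The main obstacle is ensuring that the weak-$\ast$ limits really live in the right spaces and that the constraint survives the passage to the limit. The $\rho$ component is easy because $D$ is bounded and the marginals $\rho^n_t$ are probabilities; the subtle step is the $L^1$-in-time bound on $\|m^n_t\|_D$, which is precisely where one uses that $\rho^n_t(D)=1$ to turn the $L^2(\rho^n_t)$-control of $v^n_t$ into a total variation bound for $m^n_t$. Once both components enjoy uniform bounds, closedness of $\mathcal{H}$ and lower semi-continuity of $F$ close the argument without further effort.
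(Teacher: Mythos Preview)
Your proposal is correct and follows essentially the same route as the paper: a direct-method argument using the uniform total-variation bound on $m^n$ (obtained via Cauchy--Schwarz from the finiteness of $\int_0^T\mathscr{B}_2\,dt$ together with $\rho^n_t(D)=1$), Banach--Alaoglu compactness, closedness of $\mathcal{H}$, and lower semi-continuity of $F$, with the regularity of the minimizer read off from the preceding proposition on feasible points. The only cosmetic difference is that the paper applies H\"older once in the joint $(x,t)$ variable whereas you split it into Cauchy--Schwarz in $x$ followed by Cauchy--Schwarz in $t$; the resulting bound is the same.
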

\begin{proof}
First of all, note that $F_{\alpha}$ is nonnegative. Hence, it is clear that
\begin{equation*}
F^{\ast}_{\alpha}=\inf\limits_{(\rho,m)\in \mathcal{H}} F_{\alpha}(\rho,m) \ge 0.
\end{equation*}
With the assumption that one feasible point exist, $F^{\ast}_{\alpha}\in [0,\infty)$.

Consider a feasible minimizing sequence $(\rho_n,m_n)$ such that $F_{\alpha}(\rho_n,m_n) \to F_{\alpha}^{\ast}$. Let $\rho_n(x)$ and $m_n(x)$ be the densities with respect to some common nonnegative measure $\lambda^n_t$ (for example, one can take $\lambda_t^n=\rho_n$ and then $\rho_n(x)=1$). Then,   $\sup_n F_{\alpha}(\rho_n, m_n)<\infty$ implies that
\[
\sup\limits_{n}\int_0^T \int_{D}\frac{|m_n(x)|^2}{\rho_n(x)} \lambda_t^n(dx) dt< +\infty.
\]
One then has by the H\"older inequality that 
\[
\sup_n \|m_n\|=\int_0^T |m_n(x)|\lambda_t^n(dx) \leq \sup_n\left(\int_0^T \int_{D}\rho_n(x)\lambda^n_t(dx)dt  \int_0^T \int_{D}\frac{|m_n(x)|^2}{\rho_n(x)} \lambda_t^n(dx) dt\right)^{1/2}<\infty.
\]
  Hence, one has
    \begin{equation*}
        \sup\limits_{n} \|m_n\|+\|\rho_n\| < + \infty.
    \end{equation*}
    The Banach-Alaoglu thoeorem (the closed unit ball of the dual space of a normed vector space is compact in the weak star topology) tell us that there must be a weakly convergent subsequence. Together with the lower semi-continuity of $F_{\alpha}$, the minimizer of $F_\alpha$ exists.  Moreover, by proposition \ref{property of minizer}, we know the minimizer $(\rho_{\alpha},m_{\alpha})$ of functional $F_{\alpha}$ satisfies: $(\rho_{\alpha})_t$ is absolutely continuous in $W_2(D)$ and  $m_{\alpha} \ll \rho_{\alpha}$.
    
The argument for $F_{\infty}$ is similar. 
\end{proof}

\begin{theorem}
Assume $D \subset \mathbb{R}^d$ is a bounded domain with smooth boundary. The following results hold about the OT-flow and optimal transport problems.
\begin{enumerate}[(i)]
\item For any $\rho_0, \rho_1\in \mathcal{P}(D)$, $F_{\alpha}$ is $\Gamma$-convergent to $F_{\infty}$ as $\alpha\to \infty$. 

\item Suppose that both $F_{\alpha}$ and $F_{\infty}$ have feasible points. Let $(\rho^{\alpha},m^{\alpha})$ be an optimal solution to the corresponding minimization problem \eqref{eq:otflowrigor}. Then, for any increasing sequence $\{ \alpha_{I}\}$ going to infinity, where $I$ is an index set, there exists a convergent subsequence $(\rho^{\alpha_k},m^{\alpha_k}) \in \mathcal{H}$ with $\alpha_k \to + \infty$ such that the limit $(\rho^{\infty},m^{\infty}) $ is a solution of the minimization problem \eqref{eq:otrigor}.
\end{enumerate}
\end{theorem}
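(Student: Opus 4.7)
The plan is to establish part (i) via the sequential characterization of $\Gamma$-convergence in Proposition \ref{connection} and then derive part (ii) from (i) together with a coercivity argument. For the $\Gamma$-limsup inequality I take the constant recovery sequence $(\rho_\alpha, m_\alpha) = (\rho, m)$ for every $\alpha$. If $F_\infty(\rho, m) = +\infty$ there is nothing to check; otherwise $\rho(\cdot, T) = \rho_1$, the KL penalty vanishes, and $F_\alpha(\rho, m) = \int_0^T \mathscr{B}_2(\rho_t, m_t)\,dt = F_\infty(\rho, m)$ for every $\alpha$, so the $\Gamma$-$\limsup$ is bounded by $F_\infty(\rho, m)$.

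For the $\Gamma$-liminf inequality fix an arbitrary sequence $(\rho_\alpha, m_\alpha) \Rightarrow (\rho, m)$ in $X \times Y$. If $\liminf F_\alpha(\rho_\alpha, m_\alpha) = +\infty$ there is nothing to prove; otherwise I pass to a subsequence with $F_\alpha(\rho_\alpha, m_\alpha) \leq C$, which forces both the uniform kinetic-energy bound $\int_0^T \mathscr{B}_2((\rho_\alpha)_t, (m_\alpha)_t)\,dt \leq C$ and the decay $\mathbb{KL}[\rho_\alpha(\cdot, T) \| \rho_1] \leq C/\alpha \to 0$. Lower semicontinuity of the Benamou–Brenier functional from Lemma \ref{Thm:Benamou-Brenier formula} already gives $\liminf \int_0^T \mathscr{B}_2((\rho_\alpha)_t, (m_\alpha)_t)\,dt \geq \int_0^T \mathscr{B}_2(\rho_t, m_t)\,dt$. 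The \emph{main obstacle} is to convert the vanishing KL into the terminal identity $\rho(\cdot, T) = \rho_1$, so that $\mathbf{1}_E(\rho) = 0$. My plan is to exploit the time regularity produced by Proposition \ref{property of minizer}: the uniform kinetic-energy bound propagates through the inequality \eqref{feasible solution:Lip rho} to a uniform Hölder-$1/2$ estimate $W_2((\rho_\alpha)_t, (\rho_\alpha)_s) \leq C|t-s|^{1/2}$. Since $D$ is bounded, $\mathcal{P}(D)$ is compact in $W_2$, so Arzelà–Ascoli extracts a further subsequence converging uniformly in $C([0,T]; \mathcal{W}_2(D))$ to some $\tilde\rho$; uniform $W_2$-convergence in $t$ implies weak convergence in $X$, hence $\tilde\rho$ agrees with a continuous representative of the original weak limit $\rho$, and in particular $\rho_\alpha(\cdot, T) \to \rho(\cdot, T)$ in $W_2$. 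Weak lower semicontinuity of KL (cf.\ Appendix \ref{appendix:Lower semi-continuity of KL divergence}) then yields $\mathbb{KL}[\rho(\cdot, T) \| \rho_1] \leq \liminf \mathbb{KL}[\rho_\alpha(\cdot, T)\|\rho_1] = 0$, so $(\rho, m) \in E$ and $\liminf F_\alpha(\rho_\alpha, m_\alpha) \geq F_\infty(\rho, m)$ follows.

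For part (ii), Proposition \ref{property of minizer} provides existence of a minimizer $(\rho^{\alpha_I}, m^{\alpha_I})$ for each index $I$. Any feasible $(\bar\rho, \bar m)$ for $F_\infty$ is also feasible for every $F_\alpha$ with $F_\alpha(\bar\rho, \bar m) = F_\infty(\bar\rho, \bar m) < \infty$, so minimality gives $F_{\alpha_I}(\rho^{\alpha_I}, m^{\alpha_I}) \leq F_\infty(\bar\rho, \bar m)$ uniformly in $I$. The Hölder-inequality estimate from the existence proof then delivers a uniform total-variation bound on $\{m^{\alpha_I}\}$, while $\|\rho^{\alpha_I}\| = T$ trivially; Banach–Alaoglu extracts a weakly convergent subsequence $(\rho^{\alpha_k}, m^{\alpha_k}) \Rightarrow (\rho^\infty, m^\infty)$ with limit in the weakly closed set $\mathcal{H}$. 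Applying the $\Gamma$-liminf established in part (i) and comparing with any feasible $(\bar\rho, \bar m)$ of $F_\infty$ yields
\begin{equation*}
F_\infty(\rho^\infty, m^\infty) \leq \liminf_{k \to \infty} F_{\alpha_k}(\rho^{\alpha_k}, m^{\alpha_k}) \leq F_\infty(\bar\rho, \bar m),
\end{equation*}
which identifies $(\rho^\infty, m^\infty)$ as a minimizer of \eqref{eq:otrigor} and completes the proof.
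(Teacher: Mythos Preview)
Your proof is correct, and for part~(ii) it proceeds exactly as in the paper; the $\Gamma$-limsup argument with the constant recovery sequence also matches.

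The substantive difference is in the $\Gamma$-liminf handling of the terminal constraint. The paper invokes its earlier Proposition on the lower semicontinuity of $(\rho,m)\mapsto\mathbb{KL}[\rho(\cdot,T)\|\rho_1]$ on $\mathcal{H}$ and then runs a short multiplier trick: for each fixed $n$ and all $\alpha\ge n$,
\[
n\,\mathbb{KL}[\rho(\cdot,T)\|\rho_1]\;\le\;n\,\liminf_{\alpha}\mathbb{KL}[\rho^\alpha(\cdot,T)\|\rho_1]\;\le\;\liminf_{\alpha}\alpha\,\mathbb{KL}[\rho^\alpha(\cdot,T)\|\rho_1],
\]
and letting $n\to\infty$ yields $\mathbf{1}_E\le\liminf_\alpha\alpha\,\mathbb{KL}$ directly, without any case distinction. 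Your route instead restricts to the case $\liminf F_\alpha<\infty$, extracts a uniform kinetic-energy bound, promotes it via \eqref{feasible solution:Lip rho} to a H\"older-$1/2$ equicontinuity estimate, and applies Arzel\`a--Ascoli in $C([0,T];\mathcal{W}_2(D))$ to obtain convergence of the terminal slice $\rho_\alpha(\cdot,T)\to\rho(\cdot,T)$; the vanishing KL then forces $\rho(\cdot,T)=\rho_1$. Your argument is longer but more self-contained: the paper's multiplier trick still tacitly uses $\rho^\alpha(\cdot,T)\Rightarrow\rho(\cdot,T)$ to invoke the KL lower semicontinuity, a passage from spacetime weak convergence to convergence of the time-$T$ marginal that the paper delegates to its lower-semicontinuity Proposition (whose proof for the KL term is only sketched). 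Your Arzel\`a--Ascoli step makes this point explicit, at the cost of an additional compactness argument.
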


\begin{proof}

(i) 
We use $\Gamma_{\seq}$-convergence mentioned in Proposition \ref{connection} to prove the $\Gamma$-convergence from $F_{\alpha}$ to $F_{\infty}$. By definition, we need to verify: for any weakly convergent sequence $(\rho^{\alpha},m^{\alpha}) \Rightarrow (\rho,m)$, one have
    \begin{equation*}
        \inf\limits_{(\rho^{\alpha},m^{\alpha}) \to (\rho,m)} \limsup\limits_{\alpha \to \infty} F_{\alpha} (\rho^{\alpha},m^{\alpha}) \leq F_{\infty}(\rho,m) \leq \inf\limits_{(\rho^{\alpha},m^{\alpha}) \to (\rho,m)} \liminf\limits_{\alpha \to \infty} F_{\alpha} (\rho^{\alpha},m^{\alpha}).
    \end{equation*}
Above statement is equivalent to prove:
\begin{itemize}
    \item $\exists (\rho^{\alpha},m^{\alpha}) \Rightarrow (\rho,m)$, $ F_{\infty}(\rho,m) \geq \limsup\limits_{\alpha \to \infty} F_{\alpha} (\rho^{\alpha},m^{\alpha}). $
    \item $\forall (\rho^{\alpha},m^{\alpha}) \Rightarrow (\rho,m)$, $ F_{\infty}(\rho,m) \leq \liminf\limits_{\alpha \to \infty} F_{\alpha} (\rho^{\alpha},m^{\alpha}). $
\end{itemize}
For the first, one could take the constant sequence 
\[
(\rho^{\alpha},m^{\alpha})=(\rho, m).
\]
Then one has
\begin{equation*}
    \limsup\limits_{\alpha \to \infty} F_{\alpha}(\rho^{\alpha},m^{\alpha})=\limsup\limits_{\alpha \to \infty} F_{\alpha}(\rho,m) \leq F_{\infty} (\rho,m).
\end{equation*}
The above follows from the fact $\limsup\limits_{\alpha \to \infty} \alpha \mathbb{KL}\left[\rho(x,T)||\rho_1(x)\right]
\le \mathbf{1}_E$, which holds since  $\mathbb{KL}\left[\rho(x,T)||\rho_1(x)\right] \ge 0 $ and the equality holds only when $\rho(x,T)=\rho_{1}(x)$.

For the second, for any sequence $(\rho^{\alpha},m^{\alpha}) \Rightarrow (\rho,m)$, one needs to show that
\[
F_{\infty}(\rho, m)\le \lim_{\alpha\to\infty}
F_{\alpha}(\rho^{\alpha}, m^{\alpha}).
\]
We consider the two parts in  $F_{\alpha}$ and $F_{\infty}$ separately.
The required relation for the first part $\int_0^T \mathscr{B}_2(\rho_t, m_t)dt$ follows directly from the lower semi-continuity.

Considering the second part for the KL divergence, we can fix $n$. The lower semi-continuity of KL-divergence gives:
\begin{equation*}
    n \mathbb{KL} [\rho(x,T) || \rho_{1}(x)] \leq  n  \liminf\limits_{\alpha \to \infty} \mathbb{KL} [\rho^{\alpha}(x,T) || \rho_{1}(x)].
\end{equation*}
Let $n\to +\infty$, the left side becomes the indicator function we want:
\begin{equation*}
    \mathbf{1}_E\leq \lim\limits_{n \to \infty}  n  \liminf\limits_{\alpha \to \infty} \mathbb{KL} [\rho^{\alpha}(x,T) || \rho_{1}(x)] \leq \liminf\limits_{\alpha \to \infty}  \alpha\mathbb{KL} [\rho^{\alpha}(x,T) || \rho_{1}(x)].
\end{equation*}

Combining the two parts then yields that:
\begin{equation*}
     F_{\infty}(\rho,m) \leq \liminf\limits_{n \to \infty} F_{\alpha} (\rho^{\alpha},m^{\alpha}).
\end{equation*}
Thus the $\Gamma-$convergence from $F_{\alpha}$ to $F_{\infty}$ has then been established.

(ii)  Suppose $(\rho^{\ast},m^{\ast})$ is a feasible solution of problem \eqref{eq:otrigor} and thus also a feasible point of \eqref{eq:otflowrigor}. With the existence of feasible points, both $F_{\alpha}$ and $F_{\infty}$ have global minimizers. It is then clear that
\[
F_{\alpha}(\rho^{\alpha}, m^{\alpha})
\le F_{\alpha}(\rho^*, m^*)=F_{\infty}(\rho^*, m^*).
\]
This means that $F_{\alpha}(\rho^{\alpha}, m^{\alpha})$ is uniformly bounded in $\alpha$.
Again, by H\"older inequality as demonstrated  in Proposition \ref{property of minizer}, one has
\[
\sup\limits_{\alpha} \|m^{\alpha}\|+\|\rho^{\alpha}\| < + \infty.
\]

By the Banach-Alaoglu theorem, we conclude that any bounded set in $X \times Y$ is pre-compact. Consequently, there is a subsequence $(\rho^{\alpha_k},m^{\alpha_k}) \Rightarrow (\rho,m)$. Combining with $\Gamma-$convergence of the functionals, it follows that $(\rho,m)$ is a minimizer of $F_{\infty}$.
\end{proof}

For the existence of feasible points, one assumption is that both $\rho_0$ and $\rho_1$ are absolutely continuous with respect to the Lebesgue measure.
Then according to \eqref{eq:dynamic_ot}, there is feasible point $(\rho, m)$ with $\rho_T=\rho_1$ such that
\[
W_2(\rho_0, \rho_1)^2=\int_0^1 \mathscr{B}_2(\rho_t, m_t)dt<\infty.
\]
Hence, the feasible point exists. In the application, $\rho_1$ is a normal distribution, which is clearly absolutely continuous with respect to the Lebesgue measure. However, the data distribution $\rho_0$ is often singular (which may concentrate on some low-dimensional manifolds). The point is that one can always find a distribution $\tilde{\rho_0}$, which is absolutely continuous w.r.t Lebesgue measure to approximate $\rho_0$. In specific tasks, we only have samples from $\rho_0$ for optimization. The training process later can learn a velocity field that automatically generates the approximation $\tilde{\rho_0}$ that is absolutely continuous with respect to the Lebesgue measure \cite{jing2024machine}.

\section{Convergence of Monte Carlo approximation in the large data limit}\label{sec: convergence of N}

In practical applications, one only knows sample samples from $\rho_0$ to train the optimal velocity field. Hence, the the loss functional \eqref{eq:OT-Flow} in the OT flow is not known exactly, where the expectation is replaced by the Monte Carlo method (empirical mean over the samples).  Our goal in this part is to investigate the convergence of minimizers as the data size $N$ increases to infinity, provided sufficient approximation capability of neural networks. We will neglect the error brought by training process and assume that the minimization problem can be solved exactly for convenience. 

The research on the asymptotic behavior of minimizers in the large data limit is crucial in machine learning. It helps us gain a better understanding of the influence on the minimizers resulted from approximating loss functional by using data samples. As long as we are capable of providing an accurate estimation of the bound to control the error beyond the training sets, then we can guarantee some reliability on the generalisation of the neural network minimizers.

\subsection{Setup and the Monte Carlo approximation}

We first introduce some notations to define the finite dimensional spaces for the deep neural networks (see \cite{longo2021higher,loulakis2023new} for similar discussions).

A deep neural network $u : \mathbb{R}^d \to \mathbb{R}^{d^{\prime}}$ with $L$ layers is of the following form
\begin{equation}\label{network:2}
u(x; \theta):=F_L \circ \sigma \circ F_{L-1} \cdots \circ \sigma \circ F_1(x).
\end{equation}
The map $u(\cdot; \theta)$ is characterized by the hidden layers $F_k$, which are affine maps of the form
\begin{equation}\label{architecture of NN}
F_k y=W_k y+b_k, \quad \text { where } W_k \in \mathbb{R}^{d_{k+1} \times d_k}, b_k \in \mathbb{R}^{d_{k+1}}
\end{equation}
and $\sigma$ is the activation function which we assume to be Lipschitz continuous. We use 
$\theta$ to represent collectively all the parameters of the network $\mathcal{F}_L$, namely $W_k, b_k, k = 1,\cdots,L$. 
The network is trained using an optimizer such as stochastic gradient descent (SGD) \cite{lecun1998gradient} or ADAM \cite{DBLP:journals/corr/KingmaB14} commonly. In this paper, we will discuss the case of a multi-layer perceptron, but the results of convergence only rely on the approximation properties of the network, regardless of its precise architecture. 

We denote the set of all functions $u$ which can be given by networks with a given structure (fixed depth $L$ and width in each layer) of the form \eqref{network:2} as 
\begin{equation*}
V_{\mathcal{N}}=\left\{u: D \rightarrow \mathbb{R}^{d^{\prime}} \mid  \text{There exists a network \eqref{network:2} such that }u(x)=u(x;\theta)  \right\}.
\end{equation*}
Let $d_1$ be the dimension of $\theta$. Then, for each $\theta\in \R^{d_1}$, there is a corresponding $u\in V_{\mathcal{N}}$ such that
$u(x)=u(x;\theta)$.

\begin{lemma}\label{Convergence:uniform of F}
Assume $D$ is a bounded domain and the neural network space $V_{\mathcal{N}}$ is fixed. 
If the activation function $\sigma$ is assumed to be continuous, then given a sequence of network parameters $\{\theta_n\}_{n=1}^{\infty}$ that converges to $\theta$ in $\mathbb{R}^{d_1}$, $u(x;\theta_n)$ converges to $u(x;\theta)$ uniformly, or 
\[
\lim_{n\to\infty}\sup_{x\in D}|u(x; \theta_n)-u(x;\theta)|=0.
\]
\end{lemma}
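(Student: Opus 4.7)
The plan is to prove the uniform convergence by a straightforward induction on the layer index $k$, propagating two properties simultaneously: (a) uniform convergence of the partial output after the first $k$ layers, and (b) uniform boundedness of this partial output over both $x\in D$ and $n\in\mathbb{N}$. Denote the partial output as $u^{(k)}(x;\theta)$ so that $u^{(0)}(x;\theta)=x$ and $u^{(k)}(x;\theta)=F_k\circ\sigma\cdots\sigma\circ F_1(x)$ (with $F_L$ being the final affine layer, no activation on top).

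For the base case, since $F_1(x;\theta)=W_1 x+b_1$ and $D$ is bounded, convergence $\theta_n\to\theta$ in $\R^{d_1}$ gives $\|W_1^{(n)}-W_1\|\to 0$ and $|b_1^{(n)}-b_1|\to 0$, from which
\[
\sup_{x\in D}|F_1^{(n)}(x)-F_1(x)|\le \|W_1^{(n)}-W_1\|\sup_{x\in D}|x|+|b_1^{(n)}-b_1|\to 0,
\]
and uniform boundedness is immediate. For the inductive step, suppose (a) and (b) hold at layer $k$. Then the closure of the range $\{u^{(k)}(x;\theta_n):x\in D,n\ge 1\}\cup\{u^{(k)}(x;\theta):x\in D\}$ sits inside a compact set $K\subset\R^{d_{k+1}}$. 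Since $\sigma$ is continuous, it is uniformly continuous on $K$; combined with the uniform convergence from the inductive hypothesis, this gives $\sup_{x\in D}|\sigma(u^{(k)}(x;\theta_n))-\sigma(u^{(k)}(x;\theta))|\to 0$, and the image under $\sigma$ is still contained in a bounded set. Applying the affine map $F_{k+1}^{(n)}y=W_{k+1}^{(n)}y+b_{k+1}^{(n)}$, whose coefficients converge to those of $F_{k+1}$, preserves both properties by a triangle inequality identical in spirit to the base case. Iterating up to $k=L$ yields the conclusion.

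The only delicate point is the transfer of uniform convergence across the activation. Since the paper only assumes $\sigma$ continuous (not Lipschitz), one cannot simply absorb $\sigma$ into a Lipschitz estimate; instead one must certify that the argument of $\sigma$ stays in a compact set independent of $n$. This is what property (b) in the induction is for: the uniformly convergent pre-activations form an equicontinuous family on $\bar D$ with a continuous uniform limit, hence live in a common bounded region where continuity of $\sigma$ upgrades automatically to uniform continuity. This is the main (and essentially only) nontrivial ingredient of the proof; everything else is bookkeeping on affine maps.
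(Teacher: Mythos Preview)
Your induction-on-layers argument is correct, but the paper takes a different and shorter route. Instead of propagating uniform convergence and uniform boundedness layer by layer, the paper observes in one stroke that the map $(x,\theta)\mapsto u(x;\theta)$ is jointly continuous (since it is a composition of affine maps and the continuous activation $\sigma$), and that the convergent sequence $\theta_n$ together with the bounded domain $D$ all live inside a compact set $\bar D\times \bar B(0,R)$. Uniform continuity of $u$ on this compact set then gives $\sup_{x\in D}|u(x;\theta_n)-u(x;\theta)|\to 0$ immediately. Your approach has the advantage of being more explicit about the mechanism (it shows exactly how the activation is tamed by restricting to the compact range of the previous layer), and it would adapt more readily if one wanted quantitative bounds layer by layer. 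The paper's approach buys brevity: it packages all of your induction into the single observation of joint continuity on a compact product set.
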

\begin{proof}

By the specific form of the neural networks, it is clear that $u$ is continuous with respect to $(x, \theta)$.
If $\theta_n\to \theta$, then there exists $R>0$ such that
\[
\sup_n |\theta_n|\le R,\quad |\theta|\le R.
\]
Since the set $D\times B(0, R)$ is compact (bounded closed set in finitely dimensional space), then $u$ is uniformly continuous on $D\times B(0, R)$.

By the uniform continuity, it is easy to see that if $u(\cdot ;\theta_n)$ converges uniformly to $u(\cdot; \theta)$ for $D$ bounded.
\end{proof}

Let us come back to the optimization problem for the OT-flow problem \eqref{minimization problem:OT-flow v rho}.
To solve this, one seeks the optimal field $v$ using the neural networks (as mentioned, one may parametrize a scalar function $\Phi$ using the networks and set $v=-\nabla\Phi$). 
We will enforce the neural network for $v$ to satisfy
\begin{gather}\label{eq:bcvnn}
v(x, t)\cdot n=0, x\in \partial D.
\end{gather}
(This condition may be obtained by post-processing of the output of the neural network. ) Using the network for $v$, one may find the trajectory of a particle $t\mapsto z(x, t)$ using \eqref{eq:original ODE}.
Using the flow map (trajectory), our goal is then to solve the following problem
\[
\min_{v\in V_{\mathcal{N}}} J,
\]
where $J$ is given as in \eqref{eq:OT-Flow}.
The training of the algorithm assumes knowledge of data only at a finite set of points $\{x_i\}_{i=1}^{N}$. Then, the expectation is replaced by the Monte Carlo approximation
\[
J_N:=\frac{1}{N}\sum_i[C(x_i, T)+\frac{2}{\alpha}L(x_i, T)].
\]
This formulation is then used to train $v$ (find the optimal parameter $\theta\in \R^{d_1}$).

We are then concerned with the convergence as the number of data goes to infinity.
However, studying the convergence of $v$ is not convenient.  Instead, 
the loss in \eqref{eq:OT-Flow} naturally defines a function
\begin{equation}\label{loss function:F_v}
    F_v(x)=   \frac{1}{2}|\boldsymbol{z}(\boldsymbol{x}, T)|^{2}-\ell(x,T)+ \gamma_1\int_{0}^{T} \frac{1}{2}|v(z(x, t), t)|^{2} d t +\frac{d}{2} \log (2 \pi),
\end{equation}
for a given velocity field $v$, where $\ell$ and $z$ can be solved from ODE system \eqref{eq:OT-Flow}. We will then investigate the convergence of $F_v$ for the trained $v$.
For this purpose, consider the set of all functions $F_v$ with all possible $v$:
\begin{equation*}
    F_{\mathcal{N}} : =\{ F_v :v \in V_{\mathcal{N}} \}.
\end{equation*}
The OT-flow problem then corresponds to
\begin{equation}\label{continuous problem}
    \min _{F_v \in F_{\mathcal{N}}} \mathcal{E}(F_v):= \int_D F_v(x) \rho_0(dx),
\end{equation}
where $F_v$ denotes the loss functional for velocity $v$, $\rho_0$ is the data distribution.

The discrete loss functional can be defined as following:
\begin{equation}\label{discreteF}
  \min_{F_v\in F_{\mathcal{N}}}  \mathcal{E}_N(F_v)=\frac{1}{N} \sum_{i=1}^{N} F_v(x_i).
\end{equation}
Our goal is to investigate the convergence of the optimizers for \eqref{discreteF} to that for \eqref{continuous problem} as $N\to\infty$.

The unboundedness of the parameters space still brings trouble. In practice, parameter clipping is a commonly employed technique in neural networks aimed at preventing the function $u(x;\theta)$ from exploding.
Define the clipping parameter space
\begin{equation*}
    \Theta_{R} : = \{ \theta \in \mathbb{R}^{d_1}: |\theta| \leq R  \}.
\end{equation*}
With the clipped parameters, the function $F_v$ defined in \eqref{loss function:F_v} has good features as stated below. 
\begin{lemma}
\label{lemma:bounded F_v}
    Assume $D$ is a bounded domain. Fix the structure of the neural network space $V_{\mathcal{N}}$ with the activation function $\sigma$ to be twice continuously differentiable such that the condition \eqref{eq:bcvnn} is satisfied. Suppose the parameters $\theta \in \Theta_{R}$ for some fixed large $R>0$. 
    Then $v(x,t;\theta)$ is Lipschitz continuous and the Hessian $\nabla^2 v$ is bounded in the sense that
    \begin{equation*}
        \sup_{x\in D,\theta\in \Theta_R}|\frac{\partial^2 v}{\partial x_i \partial x_j}| <\infty, \forall i,j.
    \end{equation*}
    Consequently, $F_v$ is uniformly bounded and uniformly Lipschitz continuous with respect to $x$ (uniform for $\theta\in \Theta_R$ ) .
\end{lemma}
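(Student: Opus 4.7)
The plan is to exploit the compactness of $\bar D \times [0,T] \times \Theta_R$ together with the $C^2$-smoothness of $\sigma$ to get uniform control of $v$ and its first two derivatives, and then propagate these bounds through the ODE system \eqref{eq:original ODE} to $z$ and $\ell$, from which all four terms of $F_v$ in \eqref{loss function:F_v} inherit uniform boundedness and Lipschitz continuity in $x$.

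\textbf{Step 1: Uniform $C^2$ control of $v$.} Writing $v$ (or, equivalently, $-\nabla_x \Phi$ for the network $\Phi$ that is actually parameterized) as an iterated composition of affine maps and the activation $\sigma$, the map $(x,t,\theta)\mapsto v(x,t;\theta)$ is $C^2$ in $x$ because $\sigma\in C^2$, and it is jointly continuous in $(x,t,\theta)$ together with the partial derivatives $\partial_{x_i} v$ and $\partial_{x_i}\partial_{x_j} v$. Since $\bar D\times [0,T]\times \Theta_R$ is compact, each of these continuous functions is bounded there, so we may set
\[
M_0 := \sup_{(x,t,\theta)\in \bar D\times [0,T]\times \Theta_R}|v|,\qquad M_1 := \sup |\nabla_x v|,\qquad M_2 := \sup|\nabla_x^2 v|,
\]
all finite and independent of $\theta \in \Theta_R$. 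In particular $v(\cdot,t;\theta)$ is globally $M_1$-Lipschitz in $x$ uniformly in $(t,\theta)$.

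\textbf{Step 2: Uniform estimates on the flow $z$ and the log-Jacobian $\ell$.} With $v$ Lipschitz in $x$, the system \eqref{eq:original ODE} admits unique $C^1$ solutions $z(\cdot,t),\ell(\cdot,t)$ for all $t\in[0,T]$. From $|z(x,t)-x|\le \int_0^t |v|\,ds\le TM_0$ and boundedness of $D$ we get $|z(x,T)|\le \mathrm{diam}(D)+TM_0$. Differentiating the ODE in $x$ gives the variational equation $\partial_t(\nabla_x z)=\nabla v(z,t)\,\nabla_x z$, $\nabla_x z(x,0)=I$; Gronwall then yields $|\nabla_x z(x,t)|\le e^{M_1 T}$, so $x\mapsto z(x,t)$ is globally Lipschitz with constant $e^{M_1 T}$, uniformly in $(t,\theta)$. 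For $\ell$, the identity $\ell(x,T)=\int_0^T \tr(\nabla v(z(x,t),t;\theta))\,dt$ gives the uniform bound $|\ell(x,T)|\le T\,d\,M_1$. Differentiating $\ell$ in $x$ introduces $\nabla_x z$ and $\nabla^2 v$, both already controlled by Step~1 and the Gronwall bound, so $x\mapsto \ell(x,T)$ is Lipschitz with constant $T d M_2 e^{M_1 T}$.

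\textbf{Step 3: Assembling $F_v$.} We treat the four terms in \eqref{loss function:F_v} separately. The constant $\tfrac{d}{2}\log(2\pi)$ is trivial. The term $\tfrac12 |z(x,T)|^2$ is bounded by Step~2, and its Lipschitz constant is at most $(\mathrm{diam}(D)+TM_0)\,e^{M_1 T}$ via the elementary inequality $\big||a|^2-|b|^2\big|\le (|a|+|b|)|a-b|$. The term $\ell(x,T)$ is handled in Step~2. For $\int_0^T \tfrac12 |v(z(x,t),t)|^2\,dt$, uniform boundedness by $\tfrac12 TM_0^2$ is immediate, and Lipschitzness follows by writing
\[
\tfrac12\big||v(z(x_1,t),t)|^2-|v(z(x_2,t),t)|^2\big|\le M_0 M_1 |z(x_1,t)-z(x_2,t)|\le M_0 M_1 e^{M_1 T}|x_1-x_2|,
\]
then integrating in $t$. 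Summing the four contributions yields uniform bounds and a uniform Lipschitz constant depending only on $D$, $T$, $d$, $\gamma_1$, and $M_0,M_1,M_2$, all determined once $R$ and the network architecture are fixed.

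\textbf{Main obstacle.} The only nontrivial piece is the Lipschitz estimate for $\ell(x,T)$, which requires differentiating the trace under the integral; this is where boundedness of $\nabla^2 v$ is genuinely used, and where one must combine the variational bound on $\nabla_x z$ from Step~2 with the chain rule. Every other estimate is a one-line Gronwall or mean-value argument, so the whole lemma reduces to checking that the constants $M_0,M_1,M_2$ are finite, which is guaranteed by the compactness of $\bar D\times[0,T]\times \Theta_R$ and the $C^2$-smoothness of $\sigma$.
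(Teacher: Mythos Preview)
Your overall strategy---compactness for $C^2$ bounds on $v$, Gronwall for the flow, then term-by-term assembly of $F_v$---matches the paper's, and Steps~1 and~3 are fine (indeed more explicit than the paper's own argument). But there is a genuine gap in Step~2 that the paper addresses and you do not.

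You define $M_0,M_1,M_2$ as suprema over $\bar D\times[0,T]\times\Theta_R$ and then write $|z(x,t)-x|\le\int_0^t|v(z(x,s),s)|\,ds\le TM_0$. This bound presumes $z(x,s)\in\bar D$ for all $s\le t$, which you never verify; your conclusion $|z(x,T)|\le\mathrm{diam}(D)+TM_0$ even allows $z$ to leave $\bar D$. Since the hypothesis on $\sigma$ is only $C^2$ (not globally Lipschitz), the network $v$ need not be bounded off $\bar D$, so there is no a~priori reason for the trajectory to remain in any fixed compact set, let alone $\bar D$. The estimate is therefore circular, and the same issue recurs every time you apply $M_0,M_1,M_2$ to $v(z(x,t),t)$ in the variational equation and in the bounds for $\ell$.

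The paper closes this gap by actually using the boundary hypothesis $v\cdot n=0$ on $\partial D$ (condition \eqref{eq:bcvnn}), which you list among the assumptions but never invoke. The argument is an invariance-of-domain step: since $v$ is Lipschitz, solution curves are unique; parametrizing $\partial D$ locally and using $v\cdot n=0$, one checks that the ODE restricted to $\partial D$ is well posed, so any trajectory touching $\partial D$ must stay on it. Uniqueness then forbids trajectories starting in $D$ from crossing $\partial D$, whence $z(x,t)\in D$ for all $t\in[0,T]$. Once this confinement is established, your $M_0,M_1,M_2$ bounds apply legitimately and the rest of your Step~2 and Step~3 go through unchanged.
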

\begin{proof} 
Since the architecture of the neural networks is fixed as in \eqref{architecture of NN}, both $D$ and $\Theta_R$ are bounded, the Lipschitz continuity of $v(x,t;\theta)$ and the boundedness of the Hessian $\nabla^2v$ are straightforward since $\sigma$ is twice continuously differentiable, due to the fact that continuous functions on compact sets are bounded. 

For the velocity field $v$, since $v(x,t)$ is Lipschitz continuous and the boundary condition \eqref{eq:bcvnn} holds, then $z(x, T)$ is Lipschitz continuous with respect to $x$. Moreover, $z(x, T)$ is bounded.  In fact, by the classical ODE theory, there is one and only one solution curve passing through each point since $v$ is Lipschitz. 
We find that the solution curve passing through a point $x_0 \in \partial D$ stays on $\partial D$ due to the boundary condition.
More specifically, parameterize the boundary $\partial D$ by $\gamma(s)$ using the arc length parameter $s: [-\delta, \delta] \to \mathbb{R}^d$ for some $\delta>0$. Consider a curve of the force $x(t)=\gamma(s(t))$, then 
\[
x'(t)=\tau(s) s'(t)
\]
where $\tau$ is a unit tangent vector.
Since $v(x,t) \cdot n=0$, one finds that the equation 
\[
x'(t)=v(x(t), t) \Leftrightarrow 
s'(t)=v(\gamma(s(t)), t)\cdot \tau(s(t)). 
\]
This is a well-defined ODE for $s$. Solving this ODE gives a solution $t\mapsto s(t)$ and thus giving a curve $x(t)=\gamma(s(t))$ solving the ODE $\dot{x}=v(x(t), t)$. By the uniqueness of the solutions, there is no solution curve passing through $\partial D$ to the exterior of the domain. Hence, 
\[
z(x,t) \in D, \quad \forall t\in [0, T],
\]
which implies $z(x,T)$ is bounded. 
Hence, we conclude that $\frac{1}{2}|\boldsymbol{z}(\boldsymbol{x}, T)|^{2}$ is Lipschitz continuous with respect to $x$, and the Lipschitz constant is uniform for $\theta\in \Theta_R$.

We recall that $\ell$ satisfies the equation 
\[
\partial_t\ell(x,t)=\nabla\cdot v(x, t; \theta), \quad \ell(x,0)=0.
\]
Since $\nabla^2v$ is bounded,  $\operatorname{tr}(\nabla v)$ is Lipschitz continuous, which leads to the uniform Lipschitz continuity of $\ell(x,T)$.

The Lipschitz continuity of $\int_{0}^{T} \frac{1}{2}|v(z(x, t), t)|^{2} d t$ is a direct result of the Lipschitz continuity of $v$ and Lipscthitz continuity of $z(\cdot; t)$. The constant is uniform in $\theta$.

Hence, $F_v(\cdot; \theta)$ is uniformly Lipschitz continuous with respect to $x$.

Lastly, the uniform boundedness of $F_v$ is 
straightforward as we have shown that $z(x, t)$ is uniformly bounded and $v$ is continuous.
\end{proof}

With the clipped parameters and the help of 
Lemma \ref{lemma:bounded F_v},  we will the investigate the convergence of the optimizers for \eqref{discreteF} to that for \eqref{continuous problem} as $N\to\infty$ in the next section.


\subsection{Large data limit}

In this subsection, we explore the limit behavior of the minimizers as the sample size $N$ tends to $\infty$, i.e., provided sufficient data samples, for fixed structure of the neural networks and clipped parameters with $\theta\in \Theta_R$.
Although we neglect the effect of error brought by the approximation of the minimization problem (through, for example, stochastic gradient methods), the research on the behaviour of the minimizers as $N \to \infty$ is beneficial to understanding how machine learning algorithms work. If one can establish a bound to control the generalization error beyond the training sets, it offers theoretical stability and reliability for the generalization of the neural networks.

Suppose that we have a collection of data samples $\{X_i\}_{i=1}^N$ taking values in $D$, which are i.i.d. sampled from the data distribution $\rho_0$. 
Here, we would like to investigate the limit as $N\to\infty$. Since the samples are random samples, we put this into a probabilistic framework. In particular, by the Kolmogorov extension theorem \cite{tao2011introduction}, there is a probability space $(\Omega, \mathcal{F}, \mathbb{P})$ for the i.i.d. sampled $D$-valued random variables $\{X_i\}_{i=1}^{\infty}$ , with a common law $\rho_0\in \mathcal{P}(D)$.
The corresponding optimization problem \eqref{discreteF} becomes a probabilistic problem 
\begin{equation}\label{probabilistic problem}
    \min _{F_v \in F_{\mathcal{N}}} \mathcal{E}_{N,\omega}(F_v):=\frac{1}{N} \sum_{i=1}^N F_v(X_i(\omega))=\int_D F_v(x) \rho_{N,X(\omega)}(dx).
\end{equation}
Here,
\begin{equation}\label{eq:empiricalmeasure}
    \rho_{N}=\frac{1}{N}\sum_{i=1}^N \delta_{X_i(\omega)}.
\end{equation}
is the empirical measure and $\omega\in \Omega$ means one elementary event (a concrete realization of the $N$ samples). The following is obvious. 
\begin{lemma}
The problem \eqref{probabilistic problem} always has a solution $F_{N,\omega}\in V_{\mathcal{N}}$. 
\end{lemma}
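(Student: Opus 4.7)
The plan is to recast the minimization over $F_{\mathcal{N}}$ as a minimization over the compact parameter set $\Theta_R \subset \mathbb{R}^{d_1}$, and then invoke Weierstrass's extreme value theorem. Since every element of $F_{\mathcal{N}}$ arises as $F_{v(\cdot;\theta)}$ for some $\theta\in\Theta_R$, it is enough to show that, for each fixed $\omega\in\Omega$, the map
\[
G_{\omega}:\theta \longmapsto \frac{1}{N}\sum_{i=1}^{N} F_{v(\cdot;\theta)}\bigl(X_i(\omega)\bigr)
\]
is continuous on $\Theta_R$; since $\Theta_R$ is closed and bounded in $\mathbb{R}^{d_1}$, hence compact, continuity yields the existence of a minimizer $\theta^{\star}(\omega)$ and hence $F_{N,\omega}=F_{v(\cdot;\theta^{\star}(\omega))}\in F_{\mathcal{N}}$.

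To establish continuity of $G_{\omega}$ on $\Theta_R$, I would fix an arbitrary sequence $\theta_n\to\theta$ in $\Theta_R$ and reduce the problem to showing $F_{v(\cdot;\theta_n)}(x)\to F_{v(\cdot;\theta)}(x)$ pointwise in $x\in D$, since the samples $X_i(\omega)$ are fixed for each realization $\omega$. The starting point is Lemma \ref{Convergence:uniform of F}, which gives uniform convergence $v(\cdot;\theta_n)\to v(\cdot;\theta)$ on $\bar D\times[0,T]$. Lemma \ref{lemma:bounded F_v} supplies, uniformly in $n$, a Lipschitz constant for $v(\cdot;\theta_n)$ in $x$ and a uniform bound on $\nabla^2 v$. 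These are exactly the hypotheses of the standard Gronwall-type continuous-dependence theorem for ODEs, which applied to the flow equation $\partial_t z(x,t;\theta)=v(z(x,t;\theta),t;\theta)$ gives $z(x,t;\theta_n)\to z(x,t;\theta)$ uniformly in $(x,t)\in D\times[0,T]$.

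With the convergence of the flow established, the companion ODE $\partial_t\ell(x,t;\theta)=\operatorname{tr}(\nabla v)(z(x,t;\theta),t;\theta)$ together with the uniform boundedness and continuity of $\nabla v$ in $(x,\theta)$ (a consequence of $\sigma\in C^2$ and the fixed architecture) yields $\ell(x,T;\theta_n)\to\ell(x,T;\theta)$. For the kinetic energy term, the integrand $|v(z(x,t;\theta_n),t;\theta_n)|^2$ is uniformly bounded by Lemma \ref{lemma:bounded F_v} and converges pointwise in $t$, so the bounded convergence theorem gives convergence of $\int_0^T\tfrac{1}{2}|v(z(x,t;\theta_n),t;\theta_n)|^2\,dt$. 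Combining these three convergences with the expression \eqref{loss function:F_v} for $F_v$ yields $F_{v(\cdot;\theta_n)}(x)\to F_{v(\cdot;\theta)}(x)$ for each $x$, hence $G_{\omega}(\theta_n)\to G_{\omega}(\theta)$.

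The only substantive step is the continuous dependence of the coupled ODE system for $(z,\ell)$ on the parameter $\theta$ through $v$. However, all ingredients — uniform convergence of $v$ in $\theta$, a uniform Lipschitz constant for $v$ in $x$, and boundedness of $\nabla^2 v$ — are already supplied by Lemmas \ref{Convergence:uniform of F} and \ref{lemma:bounded F_v}, so this reduces to a routine Gronwall estimate rather than a genuinely new argument. Thus the existence claim follows from compactness of $\Theta_R$ combined with these standard regularity facts.
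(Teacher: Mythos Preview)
Your proposal is correct and follows exactly the paper's approach: the paper justifies the lemma in a single sentence, noting that $\Theta_R$ is compact and the functional is continuous in $\theta$ for fixed samples $X_1,\dots,X_N$, and then invokes the extreme value theorem. You have simply spelled out in detail the continuity-in-$\theta$ step that the paper leaves as an assertion.
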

\noindent This holds because the parameter space $\Theta_R$ is a compact set and the functional is continuous in $\theta$ when we fix $X_1,\cdots, X_N$.

Since the samples are i.i.d. sampled from $\rho_0$, one has
\begin{equation*}
    \mathbb{E}[\mathcal{E}_{N,\omega}(F_v)]=\int_D F_v(x)  \rho_0(dx).
\end{equation*}
Moreover, by the Law of Large numbers \cite{varadarajan1958convergence,tucker1959generalization}, the empirical measure \eqref{eq:empiricalmeasure} converges weakly to $\rho_0$. Here, we cite a stronger result for the convergence of empirical measures under $W_1$ distance. 
\begin{lemma}\label{rate estimate on W distance}
    Assume $D \subset \mathbb{R}^d$ is a bounded domain. Let $\mu \in \mathcal{P}(D)$.
    Consider an i.i.d. sequence $(X_k)_{k \geq 1}$ of $\mu$-distributed random variables and the empirical measure $\mu_N:= \frac{1}{N}\sum_{k=1}^N \delta_{X_{k}}$. Then there exist a constant $C$  such that for all $N \geq 1$,
    \begin{equation}
        \mathbb{E}\left[ W_1\left(\mu_N, \mu\right) \right] \leq C N^{-1/d}.
    \end{equation}
\end{lemma}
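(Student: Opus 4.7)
The plan is to establish the bound by a classical dyadic partitioning/hierarchical coupling argument, whose error ultimately reduces to variances of binomial counts combined with a Cauchy--Schwarz step. Since $D$ is bounded, enclose it in a cube $Q_0$ of side length $L$. For each integer $k\ge 0$, let $\mathcal{D}_k$ denote the partition of $Q_0$ into $2^{dk}$ congruent sub-cubes of side $L\cdot 2^{-k}$, so that each cube has diameter at most $\sqrt{d}\,L\,2^{-k}$. The key observation is that a transport plan from $\mu_N$ to $\mu$ can be constructed level by level: at the finest scale $K$, match the common mass of $\mu$ and $\mu_N$ inside each cube at per-unit cost of order $2^{-K}$, and then transport the residual mass imbalances between cubes at successively coarser levels, where the per-unit transport cost at level $k$ is at most $C\,2^{-k}$.

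This hierarchical coupling yields the pointwise estimate
\begin{equation*}
W_1(\mu_N,\mu)\ \le\ C\,2^{-K}\ +\ C\sum_{k=0}^{K-1} 2^{-k}\sum_{Q\in\mathcal{D}_k}\bigl|\mu_N(Q)-\mu(Q)\bigr|.
\end{equation*}
Next, I would take expectation and exploit the fact that $N\mu_N(Q)\sim\mathrm{Binomial}(N,\mu(Q))$, which gives $\mathbb{E}\bigl|\mu_N(Q)-\mu(Q)\bigr|\le \sqrt{\mu(Q)/N}$. Applying Cauchy--Schwarz together with $\sum_{Q\in\mathcal{D}_k}\mu(Q)=1$ and $|\mathcal{D}_k|=2^{dk}$ produces
\begin{equation*}
\mathbb{E}\bigl[W_1(\mu_N,\mu)\bigr]\ \le\ C\,2^{-K}\ +\ \frac{C}{\sqrt{N}}\sum_{k=0}^{K-1} 2^{k(d/2-1)}.
\end{equation*}
For $d\ge 3$ the geometric sum is dominated by its last term $2^{K(d/2-1)}/\sqrt{N}$; optimizing over $K$ via $2^K\sim N^{1/d}$ balances the two contributions and delivers the claimed rate $C\,N^{-1/d}$. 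In low dimensions one obtains even faster rates (with a logarithmic correction when $d=2$), which in the regimes relevant to the paper only strengthen the stated inequality.

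The step that I expect to require the most care is the rigorous construction of the hierarchical coupling: at each dyadic scale one must pair the signed residuals $\mu_N-\mu$ across sibling cubes while respecting the non-negativity required of a valid transport plan, and one must verify that the aggregated per-level costs indeed telescope into the bound above. If the direct construction becomes cumbersome to write down cleanly, a robust backup is to invoke the Kantorovich--Rubinstein duality $W_1(\mu_N,\mu)=\sup_{\mathrm{Lip}(f)\le 1}\int f\,d(\mu_N-\mu)$ and control the supremum by a chaining/entropy integral over the class of $1$-Lipschitz functions on $D$. Most economically of all, one may simply cite the sharp Fournier--Guillin rates for empirical measures on bounded domains, which package the entire calculation and yield the stated bound as an immediate corollary.
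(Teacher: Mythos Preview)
The paper does not give a self-contained argument here: it simply notes that $\mu$ has all moments since $D$ is bounded and invokes Theorem~1 of Fournier--Guillin with $p=1$ and $q\gg 1$. That is exactly your ``most economical'' fallback, so your proposal already contains the paper's entire proof as its last sentence.

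Your primary route via dyadic partitioning is a genuine self-contained argument rather than a citation; it is in fact the classical construction underlying the Fournier--Guillin rates for compactly supported measures (cf.\ Ajtai--Koml\'os--Tusn\'ady, Dereich--Scheutzow--Schottstedt, Boissard--Le~Gouic). The sketch is correct for $d\ge 3$, and what it buys over the paper's one-line citation is that it makes the mechanism transparent and avoids an external dependence.

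One correction to your closing remark: the low-dimensional rates go the other way. For $d=2$ the dyadic bound is of order $N^{-1/2}\log N$, which is \emph{slower} than $N^{-1/d}=N^{-1/2}$; for $d=1$ one gets $N^{-1/2}$, which is slower than $N^{-1/d}=N^{-1}$. So neither your argument nor Fournier--Guillin actually delivers $CN^{-1/d}$ when $d\le 2$; the lemma as stated is a bit imprecise in those cases. This is harmless for the paper's purposes, since the lemma is only used downstream to conclude $W_1(\rho_N,\rho_0)\to 0$ almost surely, for which any polynomial rate suffices.
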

\noindent Note that $D$ is a bounded domain,  thus $\mu$ has $q$-th order moments for all $q$. Then this is a direct result of \cite[Theorem 1]{fournier2015rate} by setting $p=1$ and $q\gg 1$. 

Applying Lemma \ref{rate estimate on W distance} to our problem, we conclude that.
\begin{corollary}\label{lemma:W_1 convergence}
    Let $X_1, X_2, \cdots$ be a sequence of i.i.d. random variables with common law $\rho_0$. Consider the empirical measure in \eqref{eq:empiricalmeasure}.  Then 
    \[
    \lim_{N\to\infty}W_{1}( \rho_{N}, \rho_0) \to 0
    \]
    as $N \to +\infty$ almost surely.
\end{corollary}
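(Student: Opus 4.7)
The plan is to upgrade the in-expectation bound of Lemma \ref{rate estimate on W distance} to almost-sure convergence via a concentration-of-measure argument followed by Borel--Cantelli. The starting point is the observation that, because $D$ is bounded, swapping a single sample only perturbs the empirical Wasserstein distance by $O(1/N)$: if $\rho_N^{(i)}$ denotes the empirical measure obtained from $\rho_N$ by replacing $X_i$ with some $X_i'\in D$, then
\[
W_1\bigl(\rho_N,\rho_N^{(i)}\bigr) \leq \frac{\operatorname{diam}(D)}{N},
\]
as witnessed by the transport plan that keeps the other $N-1$ atoms fixed and ships $(1/N)\delta_{X_i}$ to $(1/N)\delta_{X_i'}$. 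By the triangle inequality, the map $(X_1,\dots,X_N)\mapsto W_1(\rho_N,\rho_0)$ therefore satisfies the bounded-difference property with constants $c_i=\operatorname{diam}(D)/N$.

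Next, I would apply McDiarmid's inequality to obtain, for every $t>0$,
\[
\mathbb{P}\Bigl(\bigl|W_1(\rho_N,\rho_0) - \mathbb{E}[W_1(\rho_N,\rho_0)]\bigr| > t\Bigr) \leq 2\exp\!\Bigl(-\frac{2Nt^2}{\operatorname{diam}(D)^2}\Bigr).
\]
Combined with $\mathbb{E}[W_1(\rho_N,\rho_0)]\leq C N^{-1/d}\to 0$ from Lemma \ref{rate estimate on W distance}, for any fixed $\varepsilon>0$ one has $\mathbb{E}[W_1(\rho_N,\rho_0)]<\varepsilon/2$ for all $N$ sufficiently large, and hence
\[
\mathbb{P}\bigl(W_1(\rho_N,\rho_0)>\varepsilon\bigr) \leq 2\exp\!\Bigl(-\frac{N\varepsilon^2}{2\,\operatorname{diam}(D)^2}\Bigr),
\]
which is summable in $N$. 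Borel--Cantelli then gives that the event $\{W_1(\rho_N,\rho_0)>\varepsilon\ \text{i.o.}\}$ has probability zero. Intersecting these probability-one events over a countable sequence $\varepsilon_k\downarrow 0$ yields $W_1(\rho_N,\rho_0)\to 0$ almost surely.

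The main obstacle I anticipate is verifying the bounded-difference constant cleanly; one must check that the coupling argument really gives $\operatorname{diam}(D)/N$ on the bounded domain (rather than something worse), but this is routine once the explicit transport plan above is written down. After that, the rest is a textbook concentration plus Borel--Cantelli routine. A purely soft alternative would be to invoke Varadarajan's theorem to get almost-sure weak convergence of the empirical measures directly, and then note that on the bounded set $D$ weak convergence is equivalent to $W_1$-convergence; the concentration route, however, has the additional benefit of producing a quantitative almost-sure rate, which may be useful in subsequent estimates.
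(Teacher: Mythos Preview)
Your proposal is correct. The paper itself offers essentially no proof: the corollary is stated immediately after the sentence ``Applying Lemma~\ref{rate estimate on W distance} to our problem, we conclude that'', so the authors treat the almost-sure statement as a direct consequence of the $L^1$ rate bound $\mathbb{E}[W_1(\rho_N,\rho_0)]\le CN^{-1/d}$. Strictly speaking, $L^1$ convergence alone yields a.s.\ convergence only along a subsequence, so some additional argument is needed; your bounded-difference/McDiarmid step followed by Borel--Cantelli supplies exactly that, and has the bonus of giving a quantitative almost-sure rate. The soft alternative you mention (Varadarajan's theorem for a.s.\ weak convergence, plus the equivalence of weak and $W_1$ convergence on the bounded domain $D$) is also correct and, in light of the paper's earlier citation of Varadarajan just before Lemma~\ref{rate estimate on W distance}, is probably what the authors had in mind implicitly. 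Either route is more complete than what the paper actually writes.
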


With the preparation above, we now show for a.s. $\omega \in  \Omega$, the sequence of the minimizers $F_{N,\omega}$ has convergent subsequences and the limits would be a global minimizer of the continous OT-flow problem \eqref{continuous problem}. 
The tool we use here is again the $\Gamma$-convergence. Moreover, due to the simple structure of the fixed neural network, we can obtain much stronger results compared to previous section.

\begin{theorem}\label{thm:largedatalimit}
    Assume $D$ is a bounded domain with smooth boundary and fix the neural network space $V_{\mathcal{N}}$. Consider $F_v\in F_{\mathcal{N}}$ and $\theta \in \Theta_{R}$. Then the following holds. 
    \begin{enumerate}[(i)]
    \item
    Almost surely,  the functional $\mathcal{E}_{N,\omega}(F_v)$ in \eqref{probabilistic problem} is $\Gamma$-convergent to the functional $\mathcal{E}(F_v)$ in \eqref{continuous problem}, where the convergence of $F_v$ is the uniform convergence in $C_b(D)$.
    \item Consider the minimizer $F_{N,\omega}\in V_{\mathcal{N}}$ to \eqref{probabilistic problem}.
    Then, for almost surely $\omega \in \Omega$, any subsequence of the sequence $F_{N,\omega}$ has a convergent subsequence, with the limit $F_{\omega}^{\mathcal{N}}$ being a global minimizer to \eqref{continuous problem}, and it holds that
    \[
    \lim_{N\to\infty} \mathcal{E}_{N, \omega}\left(F_{N,\omega}\right)
    =\mathcal{E}\left(F_\omega^{\mathcal{N}}\right)=\inf _{\varphi \in V_{\mathcal{N}}} \mathcal{E}(\varphi).
    \]
    \end{enumerate}
\end{theorem}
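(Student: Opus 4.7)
The plan is to reduce the almost-sure convergence of the minimizers to a deterministic argument on a fixed full-measure event, exploiting the uniform Lipschitz regularity of $F_v$ for $\theta \in \Theta_R$ (Lemma \ref{lemma:bounded F_v}) together with Wasserstein-1 convergence of empirical measures (Corollary \ref{lemma:W_1 convergence}) and Kantorovich--Rubinstein duality. The key observation is that the Lipschitz constant of $F_v$ furnished by Lemma \ref{lemma:bounded F_v} is uniform in $\theta \in \Theta_R$, so
\[
|\mathcal{E}_{N,\omega}(F_v) - \mathcal{E}(F_v)| = \Bigl|\int_D F_v \, d(\rho_{N,\omega} - \rho_0)\Bigr| \le L\, W_1(\rho_{N,\omega}, \rho_0)
\]
with $L = L(R)$ independent of $v$. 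On the full-measure event $\Omega_0 := \{\omega : W_1(\rho_{N,\omega}, \rho_0)\to 0\}$ this gives convergence $\mathcal{E}_{N,\omega} \to \mathcal{E}$ uniformly over the entire class $F_{\mathcal{N}}$.

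For part (i), I would fix $\omega \in \Omega_0$ and verify the two $\Gamma_{\seq}$ inequalities of Proposition \ref{connection} with respect to the uniform topology on $F_{\mathcal{N}}$. The $\Gamma$-$\limsup$ inequality follows by taking the constant recovery sequence $F_{v_N} = F_v$, since then $\mathcal{E}_{N,\omega}(F_v) \to \mathcal{E}(F_v)$ by the estimate above. For the $\Gamma$-$\liminf$ inequality, given any $F_{v_N} \to F_v$ uniformly, I would split
\[
|\mathcal{E}_{N,\omega}(F_{v_N}) - \mathcal{E}(F_v)| \le \|F_{v_N} - F_v\|_\infty + L\, W_1(\rho_{N,\omega}, \rho_0) \longrightarrow 0,
\]
so in fact the full convergence (stronger than $\liminf$) holds.

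For part (ii), let $\theta_N \in \Theta_R$ parametrise the minimiser $F_{N,\omega}$. Since $\Theta_R$ is compact in $\mathbb{R}^{d_1}$, extract a convergent subsequence $\theta_{N_k} \to \theta^* \in \Theta_R$. By Lemma \ref{Convergence:uniform of F}, $v(\cdot,\cdot;\theta_{N_k}) \to v(\cdot,\cdot;\theta^*)$ uniformly on $D \times [0,T]$, and with the uniform Lipschitz bounds of Lemma \ref{lemma:bounded F_v} in hand, standard Grönwall-type continuous-dependence arguments imply uniform convergence of the flows $z(x,t;\theta_{N_k})$, of the log-determinants $\ell(x,T;\theta_{N_k})$ (since $\partial_t \ell = \operatorname{tr}(\nabla v)$ with $\nabla v$ continuous in $\theta$), and of the kinetic energies $\int_0^T\tfrac{1}{2}|v(z,t)|^2\,dt$. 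Therefore $F_{N_k,\omega} \to F_{v^*}$ uniformly in $C_b(D)$ for some $F_{v^*} \in F_{\mathcal{N}}$. Using minimality $\mathcal{E}_{N_k,\omega}(F_{N_k,\omega}) \le \mathcal{E}_{N_k,\omega}(F_v)$ for any $F_v \in F_{\mathcal{N}}$, together with the uniform convergence $\mathcal{E}_{N,\omega} \to \mathcal{E}$ on $F_{\mathcal{N}}$ established in part (i), passing to the limit yields $\mathcal{E}(F_{v^*}) \le \mathcal{E}(F_v)$ for every $F_v \in F_{\mathcal{N}}$, so $F_{v^*}$ is a global minimiser; simultaneously $\mathcal{E}_{N_k,\omega}(F_{N_k,\omega}) \to \mathcal{E}(F_{v^*}) = \inf_{\varphi \in V_{\mathcal{N}}}\mathcal{E}(\varphi)$.

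The main obstacle is establishing the continuity of the map $\theta \mapsto F_v$ in the uniform norm, since $F_v$ involves not only $v$ itself but also the ODE-generated quantities $z$, $\ell$, and a time integral of $|v(z,t)|^2$. The Lipschitz continuity and Hessian bounds of Lemma \ref{lemma:bounded F_v} are exactly what make Grönwall-type arguments go through, but the pieces must be combined carefully: continuity of $z$ in $\theta$ via Grönwall applied to $\dot z = v(z,t;\theta)$, then continuity of $\ell$ via its ODE with right-hand side $\operatorname{tr}\nabla v$ (requiring $\theta$-continuity of $\nabla v$, which the smooth activation provides), and finally continuity of the kinetic term via composition. A secondary, essentially bookkeeping, obstacle is that the minimiser $\theta_N$ and the exceptional Wasserstein set both depend on randomness, but the argument is cleanly decoupled by working pointwise on the single full-measure event $\Omega_0$.
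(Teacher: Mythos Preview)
Your proposal is correct and follows essentially the same route as the paper: compactness of $\Theta_R$ to extract subsequences, uniform convergence of $F_{N_k,\omega}$, and the uniform Lipschitz bound of Lemma~\ref{lemma:bounded F_v} combined with Kantorovich--Rubinstein and Corollary~\ref{lemma:W_1 convergence} to pass to the limit in the minimality inequality. If anything, your treatment of the continuity $\theta \mapsto F_v$ via Gr\"onwall-type continuous dependence is more explicit than the paper's, which cites Lemma~\ref{Convergence:uniform of F} for $F_{N,\omega}$ without spelling out the intermediate ODE step; your emphasis on the \emph{uniform} (in $v$) convergence $\mathcal{E}_{N,\omega}\to\mathcal{E}$ also immediately gives the full-sequence value convergence $\mathcal{E}_{N,\omega}(F_{N,\omega})\to\inf_{\varphi}\mathcal{E}(\varphi)$, which the paper obtains through a separate $\liminf$/$\limsup$ sandwich.
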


We remark that the topology for $F_v\in F_{\mathcal{N}}$ is not important due to the correspondence to $\theta\in \Theta_R$, which is a subset of a finite dimensional space.

\begin{proof}[Proof of Theorem \ref{thm:largedatalimit}]
    (i) This is in fact straightforward. For any $F_N$ that converges uniformly to $F$, and since $\rho_{N}$ (defined in \eqref{eq:empiricalmeasure}) converges weakly converges to $\rho_0$, thus 
    \begin{equation}\label{eq:auxresult2}
        \lim_{N \rightarrow \infty} \int_D F_N \rho_{N}(dx)
        =\lim_{N \rightarrow \infty} \int_D (F_N-F) \rho_{N}(dx)
        +\lim_{N \rightarrow \infty}\int_D F \rho_N(dx)
        = \int_D F  \rho_0(dx)
    \end{equation}
    The first term here goes to zero because 
    \[
    |\int_D (F_N-F) \rho_{N}(dx)|\le \|F_N-F\|\to 0.
    \]
    The second term converges due to the weak convergence of $\rho_N$ almost surely by
    Corollary \ref{lemma:W_1 convergence}.
    
    (ii) First, for each $F_{N,\omega} \in V_{\mathcal{N}}$, we denote its corresponding neural network parameters in $\Theta_{R}$ as $\theta_{N,\omega}$. Since $| \theta_{N,\omega} | \leq R$, the sequence $\{ \theta_{N,\omega} \}$ has a subsequence that converges to some $\theta_{\omega}^{\mathcal{N}}$. We denote the corresponding neural network function of $\theta_{\omega}^{\mathcal{N}}$ as $F_{\omega}^{\mathcal{N}}$. By Lemma \ref{Convergence:uniform of F}, $F_{N,\omega}$ has a subsequence $\{ F_{N_k,\omega} \}$ that uniformly converges to $F_{\omega}^{\mathcal{N}}$. 

    By the $\Gamma$-convergence in Part (i), $F_{\omega}^{\mathcal{N}}$ is in fact a global minimizer of \eqref{continuous problem}:
    \begin{gather}\label{eq:result1}
    \mathcal{E}\left(F_\omega^{\mathcal{N}}\right)=\inf _{\varphi \in V_{\mathcal{N}}} \mathcal{E}(\varphi).
    \end{gather}
    
    To show the claim that the optimal value of the loss function also converges, our core idea is to prove the following inequalities for any convergent subsequence with limit $F_\omega^{\mathcal{N}} \in V_{\mathcal{N}}$:
    \begin{equation}\label{N:inf sup bound}
        \mathcal{E}\left(F_\omega^{\mathcal{N}}\right) \leq \liminf _{N \rightarrow \infty} \mathcal{E}_{N, \omega}\left(F_{N,\omega}\right) \leq \limsup _{N \rightarrow \infty} \mathcal{E}_{N, \omega}\left( F_{N,\omega} \right) \leq \inf _{\varphi \in V_{\mathcal{N}}} \mathcal{E}(\varphi).
    \end{equation}
    Due to \eqref{eq:result1}, the limit in fact exists along this subsequence. 
    Then, for any subsequence, there is a further subsequence such that the loss function value converges to $\inf _{\varphi \in V_{\mathcal{N}}} \mathcal{E}(\varphi)$, which is the same limit. Hence, the whole sequence converges.

    Next, we show \eqref{N:inf sup bound}. By the uniform convergence of the functions, it is clear (similar to \eqref{eq:auxresult2}) that
    \begin{equation*}
        \begin{aligned}
            \liminf _{N \rightarrow \infty} \mathcal{E}_{N, \omega}\left(F_{N,\omega}\right)= \mathcal{E}\left(F_\omega^{\mathcal{N}}\right).
        \end{aligned}
    \end{equation*}
    For the second inequality, the minimising property of $F_{N,\omega}$ implies that
    \begin{equation*}
        \mathcal{E}_{N, \omega}\left (F_{N,\omega} \right) \leq \mathcal{E}_{N, \omega}(F), \quad \text { for all } F \in V_{\mathcal{N}} .
    \end{equation*}
    Next, take a sequence $(F_n)_n$ in $V_{\mathcal{N}}$ that realises the inf in \eqref{N:inf sup bound}, i.e.,
    \begin{equation}\label{N:sup_proof_1}
        \lim _{n \rightarrow \infty} \mathcal{E}(F_n)=\inf _{\varphi \in V_{\mathcal{N}}} \mathcal{E}(\varphi).
    \end{equation}
    We recall the dual description of the $W_1$ distance \cite{santambrogio2015optimal}:
    \[
    W_1(\mu, \nu)=\sup_{\varphi: \|\varphi\|_{\mathrm{lip}}\le 1}\int \varphi\, d(\mu-\nu),
    \]
    where $\|\cdot\|_{\mathrm{lip}}$ means the least Lipschitz constant for the function.
    It then follows that
    \begin{equation*}
        \int_D F_n d(\rho_{N}-\rho_0) \leq C_L \cdot W_1(\rho_{N},\rho_0),
    \end{equation*}
    where $C_L$ is the upper bound of Lipschitz constant for all functionals in $F_{\mathcal{N}}$ indicated in Lemma \ref{lemma:bounded F_v}.
    By lemma \ref{lemma:W_1 convergence}, $W_1(\rho_{N},\rho_0) \to 0$ when  $N \to \infty$ almost surely. Thus
    \begin{equation*}
        \limsup _{N \rightarrow \infty} \mathcal{E}_{N, \omega}\left(F_{N,\omega} \right) \leq \limsup _{N \rightarrow \infty} \mathcal{E}_{N, \omega}\left(F_n\right) =  \mathcal{E}(F_n)
    \end{equation*} 
    holds for all $n$. Then we have
    \begin{equation*}
        \limsup _{N \rightarrow \infty} \mathcal{E}_{N, \omega}(F_{N,\omega}) \leq \inf_{n}  \mathcal{E}(F_n) = \inf _{\varphi \in V_{\mathcal{N}}} \mathcal{E}(\varphi).
    \end{equation*}
   The proof is thus complete.
\end{proof}

\subsection{Training the optimal velocity field}
To enhance the approximation ability of neural networks, we can  increase the complexity of neural networks and relax the clipping constant $R$ for parameters. Then the classical theory of universal approximation \cite{barron1993universal,cybenko1989approximation} ensures that we can select a sequence of neural network space $\{ V_{\ell}\}$ and  parameter space $\{ \Theta_{R_{\ell}} \}$ with increasing clipping constant $R_{\ell} \to \infty$, such that for each $v \in L^1([0, T]; L^2(\rho_t))$, there exists a $v_{\ell} \in V_{\ell}$ such that
\begin{equation*}
    \int_0^T \int_{D} |v-v_{\ell}|^2 \rho_t dxdt \to 0 \text{ \ as \ } \ell \to \infty,
\end{equation*}
which indicates that with increasing complexity of architecture, neural network will gradually have enough express capability to approximate the theoretical solutions of the OT problem. 

In real training, one can let $\ell \to \infty$ and $N \to \infty$ simultaneously. With delicately selected optimizers and hyperparameters, OT-Flow can find a minimizer close to the theoretical solution of problem \ref{minimization problem:OT-flow v rho}. Moreover, as $\alpha \to \infty$, the neural network minimizers of OT-Flow with different $\alpha$ will converges to classical OT problem solutions as we mentioned in section \ref{Convergence from OT-Flow to OT}.

\section{Conclusion and discussion}\label{sec:dis}
In summary, we have conducted two two primary convergence analyses for OT-Flow, one of the deep generative models. The first part employs $\Gamma$-convergence to establish the convergence of minimizers from OT-Flow to classical OT as the regularization coefficient $\alpha \to \infty $. The second part leverages the liminf-limsup framework to demonstrate the convergence of minimizers as the number of training samples $N \to \infty$. Furthermore, if we provide the neural network with sufficient approximation capability, the minimizers of OT-Flow will theoretically converges to classical OT ones. Our work enhances the understanding of the convergence properties of CNFs models with regularization, providing theoretical assurances for stability during training. Future research directions may involve applying similar methodologies to develop convergence analyses for other deep generative models, such as DPMs, and generative models associated with optimal control.

\subsection*{Acknowledgement}
This work is partially supported by the National Key R\&D Program of China No. 2020YFA0712000 and No. 2021YFA1002800. The work of L. Li was partially supported by NSFC 12371400 and 12031013, Shanghai Municipal Science and Technology Major Project 2021SHZDZX0102,  and Shanghai Science and Technology Commission (Grant No. 21JC1403700, 21JC1402900). The authors thank Ling Guo for helpful discussions and comments.

\appendix



\section{Lower semi-continuity of KL divergence}\label{appendix:Lower semi-continuity of KL divergence}
The central equation to derive the lower semi-continuity of KL divergence is the (Fenchel) dual formulation of the KL \cite[Lemma 9.4.4]{ambrosio2005gradient}:
\begin{equation}
    \mathbb{KL}[\mathbb{P}||\mathbb{Q}]= \sup\limits_{h \in C_{b}(\mathbb{R}^d)} \left\{  1+\int h d \mathbb{P} -\int e^{h} d \mathbb{Q} \right\}
\end{equation}
Thus KL divergence is lower semi-continuous with respect to $\mathbb{P}$ is a direct consequence of the fact that it is expressed as a supremum of linear functional. Suppose $\mathbb{P}^{n} \Rightarrow \mathbb{P}$, then  for all $\epsilon$, there exits some $\bar{h} \in C_{b}(\mathbb{R}^d) $ satisfies
\begin{equation}
    \begin{aligned}
             \mathbb{KL}[\mathbb{P}||\mathbb{Q}] &= \sup\limits_{h \in C_{b}(\mathbb{R}^d)} \left\{  1+\int h d \mathbb{P} -\int e^{h} d \mathbb{Q} \right\}\\
             & < 1+\int \bar{h} d \mathbb{P} -\int e^{ \bar{h}} d \mathbb{Q} + \epsilon \\
             & \leq \liminf\limits_{n \to \infty}  \left\{  1+\int \bar{h} d \mathbb{P}^{n} -\int e^{\bar{h}} d \mathbb{Q} \right\} + \epsilon\\
             & \leq \liminf\limits_{n \to \infty} \sup\limits_{h \in C_{b}(\mathbb{R}^d)} \left\{  1+\int h d \mathbb{P}^{n} -\int e^{h} d \mathbb{Q} \right\} + \epsilon
    \end{aligned}
\end{equation}
Thus we have $\mathbb{KL}[\mathbb{P}||\mathbb{Q}] \leq  \liminf\limits_{n \to \infty}  \mathbb{KL}[\mathbb{P}^{n}||\mathbb{Q}]$, i.e. lower semi-continuity.

\normalem
\bibliographystyle{plain}
\bibliography{gamma_OT}





\end{document}